\newcommand{\bdism}{\begin{displaymath}}
\newcommand{\edism}{\end{displaymath}}
\newcommand{\pp}{\mathbb{P}}
\newcommand{\oo}{\mathcal{O}}
\newcommand{\K}{\kappa_\sigma}
\DeclareMathOperator{\vol}{vol}
\DeclareMathOperator{\supp}{Supp}
\DeclareMathOperator{\bs}{Bs}
\DeclareMathOperator{\B}{\mathbf{B}}
\newtheorem{theorem}{Theorem}[section]
\newtheorem{proposition}[theorem]{Proposition}
\newtheorem{lemma}[theorem]{Lemma}
\newtheorem{definition}[theorem]{Definition}
\newtheorem{question}[theorem]{Question}
\address{Department of Mathematics, Princeton University, Princeton NJ 08540,
USA} \email{dicerbo@math.princeton.edu}
\author{\scshape Gabriele Di Cerbo}
\title{\bf Asymptotic growth of global sections on open varieties}
\begin{document}
\pagestyle{headings}

\begin{abstract}
Let $X$ be a projective variety and let $E$ be a reduced divisor. We study the asymptotic growth of the dimension of the space of global sections of powers of a divisor $D$ on $X\backslash E$. We show that it is always bounded by a polynomial of degree $\dim(X)$, if finite. Furthermore, when $D$ is big, we characterize the finiteness of the cohomology groups in question.  This answers a question of Zariski and Koll\'ar. 
\end{abstract}
\maketitle

\tableofcontents
\markboth{GABRIELE DI CERBO}{\textbf{ASYMPTOTIC GROWTH OF GLOBAL SECTIONS}}

\section{Introduction}

\pagenumbering{arabic}

Hilbert's 14th problem on the finite generation of invariants was reinterpreted by Zariski in \cite{Zar} as a question on the finite generation of the ring of global sections on a quasi-projective variety.
The quasi-affine version, studied by Zariski, asks for the finite generation of
$H^0\bigl(U, \oo_U\bigr)$ for any quasi-affine variety $U$. The quasi-projective version asks for the finite generation of $\bigoplus_{m=0}^{\infty}  H^0\bigl(U, \oo_U(mD)\bigr)$ for a  Cartier divisor $D$. The latter is of course only possible if
the individual groups $H^0\bigl(U, \oo_U(mD)\bigr)$ are finite dimensional for every $m$.
Nagata's examples show that finite generation frequently fails in both cases.
As a weakening of finite generation, Koll\'ar asked whether the groups
$H^0\bigl(U, \oo_U(mD)\bigr)$ always have polynomial growth. The aim of this note is to answer this affirmatively.

\begin{theorem}\label{main}
Let $X$ be a normal projective variety of dimension $n$ defined over an algebraically closed field and let $E$ be a reduced divisor. Set $U:=X\backslash E$. Let $D$ be a Cartier divisor on $X$ such that $H^{0}(U,\oo_{U}(m D|_{U}))$ is finite dimensional for any $m>0$. Then there exists a constant $C>0$ such that
\bdism
h^{0}(U,\oo_{U}(m D|_{U}))\leq C m^{n},
\edism 
for any $m$.
\end{theorem}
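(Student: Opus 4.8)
First reduce to the essential case. If $H^{0}(U,\oo_U(mD|_U))=0$ for every $m>0$ there is nothing to prove, so fix $m_0$ with $R_{m_0}:=H^{0}(U,\oo_U(m_0D|_U))\neq 0$. Multiplication by a nonzero element of $R_{m_0}$ embeds $H^{0}(U,\oo_U)$ into $R_{m_0}$, so $H^{0}(U,\oo_U)$ is a finite--dimensional domain over an algebraically closed field and hence is the ground field itself; the same argument shows that every section of $\oo_X(kE)$ is a scalar multiple of the canonical section of $kE$, so $h^{0}(X,\oo_X(kE))=1$ for all $k\geq 0$ and $\kappa(X,E)=0$. Write $E=E_1+\dots+E_r$ with the $E_i$ prime; no resolution is used, which keeps the argument characteristic--free. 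The basic identification is
\bdism
H^{0}(U,\oo_U(mD|_U))=\varinjlim_k H^{0}\bigl(X,\oo_X(mD+kE)\bigr),
\edism
and, the left side being finite--dimensional, the increasing chain on the right stabilises: there is a least $j(m)\geq 0$ with $H^{0}(U,\oo_U(mD|_U))=H^{0}\bigl(X,\oo_X(mD+j(m)E)\bigr)$. Thus $j(m)$ is the largest pole order along a component of $E$ of a section of $\oo_U(mD|_U)$.

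\emph{Reduction to a linear bound on pole orders.} It is enough to show $j(m)\leq cm$ for a constant $c$ independent of $m$. Granting this, choose a very ample divisor $A$ and an integer $N$ so that $NA-D$ and $A-E$ are effective; then $mD+j(m)E\preceq m(N+c)A$, whence
\bdism
h^{0}(U,\oo_U(mD|_U))=h^{0}\bigl(X,\oo_X(mD+j(m)E)\bigr)\leq h^{0}\bigl(X,\oo_X(m(N+c)A)\bigr),
\edism
and the last quantity is bounded above by a polynomial of degree $n=\dim X$ in $m$ (it agrees with the Hilbert polynomial of $A$ for $m\gg 0$). This yields $h^{0}(U,\oo_U(mD|_U))\leq Cm^{n}$.

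\emph{The linear bound, by induction on $n$.} The cases $n\leq 1$ are immediate: when $n=1$ the hypothesis forces $E=\emptyset$ (otherwise $U$ is affine and $H^{0}(U,\oo_U(mD|_U))$ is infinite--dimensional), so $j(m)=0$. For the inductive step, let $\phi$ be a rational function with $\ord_{E_i}(\operatorname{div}\phi+mD)=-j$ for some component $E_i$ and with $\operatorname{div}\phi+mD+kE\geq 0$ globally for $k\gg 0$; thus $\phi$ represents a section of $\oo_U(mD|_U)$ of pole order exactly $j$ along $E_i$. Its leading coefficient along $E_i$ is a nonzero rational section on $E_i$; since each order of pole along $E_i$ twists $\oo_X(mD)|_{E_i}$ by $\oo_X(E_i)|_{E_i}$, while poles along the remaining components of $E$ contribute effective divisors supported on $E_i\cap(E-E_i)$, these classes combine to place that section in $H^{0}\bigl(E_i,\oo_{E_i}(mD|_{E_i}+jE|_{E_i})\bigr)$, which is therefore nonzero. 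Hence
\bdism
j(m)\ \leq\ \max_{1\leq i\leq r}\ \max\bigl\{\,a\geq 0:\ h^{0}\bigl(E_i,\oo_{E_i}(mD|_{E_i}+aE|_{E_i})\bigr)>0\,\bigr\}.
\edism
It remains to bound the right--hand side by $O(m)$. This is where the finiteness on $X$ is transported to the $(n-1)$--dimensional $E_i$: one checks that finiteness of $H^{0}(U,\oo_U(mD|_U))$ for all $m$ forces, on each $E_i$ (after passing to normalisations if necessary), the finiteness hypothesis for the pair $\bigl(E_i\setminus (E-E_i)|_{E_i},\,D|_{E_i}\bigr)$, so the inductive hypothesis applies and gives a polynomial bound of degree $n-1$; together with intersection--theoretic control of the ``self--intersection direction'' $\oo_X(E_i)|_{E_i}$ on $E_i$, this forces the exponents $a$ above to be $O(m)$.

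\emph{Main obstacle.} The heart of the matter is exactly this last step: the linear growth of pole orders, equivalently the claim that the linear series on the boundary components inherit from $X$ the right finiteness. This does not follow formally from soft, purely convex--geometric considerations about the bigraded algebra $\bigoplus_{m,k}H^{0}(X,\oo_X(mD+kE))$ --- one can write down semigroups in $\zz^{n+1}$ with all slices finite but with super--polynomial growth --- so the algebraic geometry of $X$, notably $\kappa(X,E)=0$ and the descent of the finiteness property to the $E_i$, has to be used in an essential way.
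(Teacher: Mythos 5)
Your reductions are fine as far as they go: the identification of $H^{0}(U,\oo_U(mD|_U))$ with a stabilized group $H^{0}(X,\oo_X(mD+j(m)E))$, and the observation that everything follows once one knows $j(m)=O(m)$, is exactly the pivot on which the paper's proof also turns. But the linear bound on the pole order is the entire content of the theorem, and you do not prove it: your ``main obstacle'' paragraph concedes precisely this. The inductive scheme you sketch has two unproven (and doubtful) steps. First, the descent claim that finiteness of $H^{0}(U,\oo_U(mD|_U))$ for all $m$ forces the analogous finiteness for $\bigl(E_i\setminus(E-E_i),D|_{E_i}\bigr)$ does not follow from anything you write: sections on $E_i$ with poles along $(E-E_i)|_{E_i}$ need not lift to $X$, so unbounded growth on $E_i$ produces no contradiction with the hypothesis on $X$. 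Second, even granting descent, the phrase ``intersection-theoretic control of the self-intersection direction $\oo_X(E_i)|_{E_i}$'' is not an argument; when $\oo_X(E)|_{E_i}$ is, say, numerically trivial but non-torsion, nothing in the data on $E_i$ alone bounds the exponents $a$, and this is exactly the regime where the problem is delicate. Note also that your elementary argument only yields $\kappa(X,E)=0$, whereas what is actually needed (and what the hypothesis really forces, when $D$ is big) is the much stronger numerical statement $\K(E)=0$; in view of Lesieutre's example these are genuinely different invariants, and no soft argument upgrades one to the other.

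For comparison, the paper fills this gap by completely different means: it first reduces to $D$ big via the Iitaka fibration of the graded linear series $V_m=H^{0}(U,\oo_U(mD|_U))$ (using \cite{CJ19}, as in Corollary 2.1.38 of \cite{Laz1}), and then, in the big case, uses the results of \cite{FKL} together with restricted volumes. Concretely: finiteness forces $\K(E)=0$ (Lemma \ref{numeric}); if $\supp(E)\nsubseteq\B_+(D+aE)$ for all $a$, then log-concavity of restricted volumes and the derivative formula of Theorem \ref{okounkov} make $\vol(D+aE)$ grow linearly in $a$, contradicting $\K(E)=0$ via Proposition \ref{lehmann} (Proposition \ref{notcontained}); and once $\supp(E)\subseteq\B_+(D+aE)$, Theorem \ref{plus} gives $h^{0}(X,\oo_X(mD+kE))=h^{0}(X,\oo_X(m(D+aE)))$ for all $k\geq ma$, which is exactly the bound $j(m)\leq am$ you need. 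Without some substitute for this positivity input (Nakayama's $\sigma$-decomposition, the Fulger--Koll\'ar--Lehmann theorems, and the Lazarsfeld--Musta\c t\u a differentiation of the volume), your induction on dimension cannot be closed, so as it stands the proposal is incomplete.
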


Moreover, we characterize when the finite dimensionality in Theorem \ref{main} is satisfied, assuming that $D$ is big. We denote by $D = P_\sigma(D) + N_\sigma (D)$ the Zariski decomposition of a pseudo-effective divisor as defined in \cite{Nak}.

\begin{theorem}\label{bigtheo}
$X$, $D$, $E$ and $U$ as above. Furthermore, assume that $D$ is big. Then $H^{0}(U,\oo_{U}(m D|_{U}))$ is finite dimensional for any $m>0$ if and only if there exists $a\geq 0$ such that $E\leq N_\sigma (D+aE)$.
\end{theorem}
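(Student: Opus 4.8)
The plan is to reduce everything to divisors on $X$ and to Nakayama's $\sigma$-decomposition. Since $E$ is effective and reduced, $H^{0}(U,\oo_{U}(mD|_{U}))=\varinjlim_{k}H^{0}(X,\oo_{X}(mD+kE))$ with injective transition maps, so $h^{0}(U,\oo_{U}(mD|_{U}))$ is finite precisely when the non-decreasing integer sequence $h^{0}(X,mD+kE)$ is eventually constant in $k$. I will use the following facts about the $\sigma$-decomposition of a big $\mathbb{R}$-divisor $L$ (see \cite{Nak}, and Boucksom's and Nakamaye's work for the last three): $h^{0}(X,\oo_{X}(L))=h^{0}(X,\oo_{X}(\lfloor P_\sigma(L)\rfloor))$ and $\vol(L)=\vol(P_\sigma(L))$; $P_\sigma(cL)=cP_\sigma(L)$ for $c>0$; subadditivity of $\sigma_{\Gamma}$, whence $P_\sigma(L+\Delta)\ge P_\sigma(L)$ coefficientwise for $\Delta\ge 0$; if $\Gamma_{1},\dots,\Gamma_{r}$ are the prime components of $N_\sigma(L)$ then $P_\sigma(L+\sum_{j}c_{j}\Gamma_{j})=P_\sigma(L)$ for all $c_{j}\ge 0$; $\vol$ is $\mathcal{C}^{1}$ on the big cone with directional derivative $n\,\langle L^{n-1}\rangle\cdot\Delta$ in the direction of an effective $\Delta$, and $\langle L^{n-1}\rangle$ is monotone in $L$; $\vol(L+\Delta)=\vol(L)$ (with $\Delta\ge 0$) forces $P_\sigma(L+\Delta)=P_\sigma(L)$; and $\B_{+}(L)$ equals the union of the prime divisors $\Gamma$ with $\langle L^{n-1}\rangle\cdot\Gamma=0$.

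For the implication $(\Leftarrow)$, suppose $E\le N_\sigma(D+aE)$. Then each component of $E$ is a component of $N_\sigma(D+aE)$, so by the stabilisation property $P_\sigma(D+tE)=P_\sigma(D+aE)=:P$ for all large integers $t$, and hence for all real $t\gg 0$ (squeeze $P$ between neighbouring integer values using monotonicity of $P_\sigma$ in $t$). Then for $k\gg 0$ we get $P_\sigma(mD+kE)=mP_\sigma(D+\tfrac{k}{m}E)=mP$, so $h^{0}(X,mD+kE)=h^{0}(X,\lfloor mP\rfloor)$ is eventually constant in $k$, and $h^{0}(U,\oo_{U}(mD|_{U}))=h^{0}(X,\lfloor mP\rfloor)<\infty$ for every $m$.

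For $(\Rightarrow)$, assume $h^{0}(U,\oo_{U}(mD|_{U}))<\infty$ for all $m$. First I would prove $\vol(D+tE)$ is bounded: for fixed $m,k$, from $h^{0}(X,p(mD+kE))\le h^{0}(U,\oo_{U}(pmD|_{U}))\le C(pm)^{n}$ (the last bound by Theorem~\ref{main}, with $C$ independent of $p$) and $h^{0}(X,\oo_{X}(pL))=\frac{\vol(L)}{n!}p^{n}+o(p^{n})$ for $L:=mD+kE$, dividing by $p^{n}$ and letting $p\to\infty$ yields $\vol(D+\tfrac{k}{m}E)\le n!\,C$, hence $\vol(D+tE)\le n!\,C$ for all $t\ge 0$ by continuity. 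Next put $M_{t}:=P_\sigma(D+tE)$; for each component $E_{i}$ of $E$ write $\operatorname{mult}_{E_{i}}(M_{t})=\operatorname{mult}_{E_{i}}(D)+g_{i}(t)$ with $g_{i}(t):=t-\sigma_{E_{i}}(D+tE)$, which is non-decreasing by subadditivity. I claim each $g_{i}$ is bounded; granting this, $\sigma_{E_{i}}(D+tE)=t-g_{i}(t)\ge t-g_{i}(\infty)\to\infty$, so choosing $a\ge 1+\max_{i}g_{i}(\infty)$ gives $\sigma_{E_{i}}(D+aE)\ge 1$ for every $i$, i.e. $E\le N_\sigma(D+aE)$, as wanted.

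The claim — and the crux of the whole theorem — is where the non-nef theory is really needed. Suppose some $g_{i_{0}}$ is unbounded, put $E_{0}:=E_{i_{0}}$, and pick $s_{0}$ with $\operatorname{mult}_{E_{0}}(M_{s_{0}})>0$ (possible since $g_{i_{0}}\to\infty$). As $M_{s_{0}}$ is a positive part, $\sigma_{E_{0}}(M_{s_{0}})=0$; if $\langle M_{s_{0}}^{n-1}\rangle\cdot E_{0}=0$ then $E_{0}\subseteq\B_{+}(M_{s_{0}})$, so $\vol(M_{s_{0}}-\varepsilon E_{0})=\vol(M_{s_{0}})$ for small $\varepsilon>0$ with $M_{s_{0}}-\varepsilon E_{0}$ still big, forcing $P_\sigma(M_{s_{0}}-\varepsilon E_{0})=P_\sigma(M_{s_{0}})=M_{s_{0}}$, i.e. $N_\sigma(M_{s_{0}}-\varepsilon E_{0})=-\varepsilon E_{0}\not\ge 0$, absurd; hence $\theta:=\langle M_{s_{0}}^{n-1}\rangle\cdot E_{0}>0$. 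For $N>s_{0}$ set $\Delta:=M_{N}-M_{s_{0}}\ge 0$; integrating $\tfrac{d}{ds}\vol(M_{s_{0}}+s\Delta)$ over $s\in[0,1]$ and using monotonicity of positive products together with $\Delta\ge\operatorname{mult}_{E_{0}}(\Delta)\,E_{0}$ gives
\bdism
\vol(M_{N})-\vol(M_{s_{0}})\ \ge\ n\theta\,\operatorname{mult}_{E_{0}}(\Delta)\ =\ n\theta\bigl(g_{i_{0}}(N)-g_{i_{0}}(s_{0})\bigr)\ \longrightarrow\ \infty,
\edism
contradicting the boundedness of $\vol$. Thus the main obstacle is this last estimate: ruling out that $P_\sigma(D+tE)$ carries a fixed prime divisor with unbounded multiplicity yet bounded volume, which requires both the Nakamaye-type input (a $\sigma$-trivial big class has no positive multiplicity along a divisor in its augmented base locus) and the precise first-order behaviour of $\vol$ along effective directions; by contrast the stabilisation lemma used for $(\Leftarrow)$ is a comparatively routine citation.
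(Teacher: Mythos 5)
Your ($\Leftarrow$) direction is fine and close in spirit to the paper's (the paper passes through $\supp(E)\subseteq\B_{+}(D+aE)$ and Theorem \ref{plus}, you pass through the stabilization of $P_\sigma$ under adding divisors supported on the negative part; both work). The problems are in ($\Rightarrow$), and there are two. First, your boundedness of $\vol(D+tE)$ is obtained by quoting Theorem \ref{main}, but in this paper Theorem \ref{main} is itself deduced from Theorem \ref{bigtheo} (via the Iitaka-fibration reduction to the big case), so this is circular: finite dimensionality of each $H^{0}(U,\oo_{U}(mD|_{U}))$ alone gives no a priori control on how these dimensions grow in $m$, and that growth bound is exactly what is at stake. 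The paper obtains the needed boundedness non-circularly: finite dimensionality forces $\K(E)=0$, i.e. $E=N_\sigma(E)$ (Lemma \ref{numeric}), and then Proposition \ref{lehmann} bounds $\vol(A+tE)$, hence $\vol(D+tE)$, for an ample $A$ with $A-D$ effective.

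Second, the crux step is wrong. From $E_0\subseteq\B_{+}(M_{s_0})$ you conclude $\vol(M_{s_0}-\varepsilon E_0)=\vol(M_{s_0})$; but the FKL invariance attached to $\B_{+}$ concerns \emph{adding} effective divisors (Theorem \ref{plus}), while invariance under subtraction is governed by $N_\sigma$ (Theorem \ref{minus}), and here $\sigma_{E_0}(M_{s_0})=0$, so that criterion fails. Concretely, on the blow-up of $\pp^2$ at a point, with $M$ the pullback of a line and $E_0$ the exceptional curve, one has $E_0\subseteq\B_{+}(M)$ yet $\vol(M-\varepsilon E_0)=1-\varepsilon^{2}<\vol(M)$. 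Worse, the dichotomy you are after is itself not forced: a divisor can have positive coefficient along $E_0$, satisfy $\sigma_{E_0}=0$, and still have $\langle M^{n-1}\rangle\cdot E_0=0$ (the pullback of a line, written as the strict transform of a line through the point plus $E_0$, is nef and has degree $0$ on $E_0$). So $\theta>0$ is unjustified precisely in the hard case, and your linear lower bound on $\vol(M_N)$ collapses. The paper rules out the bad case by a different mechanism: if $\supp(E)\nsubseteq\B_{+}(D+aE)$ for every $a$, log-concavity of the restricted volume gives a uniform bound $\vol_{X|E}(D+aE)\geq\epsilon>0$, so by Theorem \ref{okounkov} the volume $\vol(D+aE)$ grows linearly, contradicting the boundedness above; the passage from $\supp(E)\subseteq\B_{+}(D+aE)$ to $E\leq N_\sigma(D+(a+1)E)$ is then made with Theorem \ref{minus}. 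You would need an argument of this type (restricted volumes or positive products along $E$ itself, not the derivative at the positive part) to close the gap.
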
 

We can think of elements in $H^{0}(U,\oo_{U}(m D|_{U}))$ as sections of $\oo_X(mD)$ with pole of arbitrary degree along $E$. Theorem \ref{bigtheo} implies that the order of the pole along $E$ is at most linear in $m$.

As the proof of the projective version does not generalize to our setting, we will use completely different techniques. Most of the proofs here are based on the results contained in \cite{FKL}. Moreover, we will use them to prove some useful results relating different notions of numerical dimension of divisors in our special case.

In the final section of the paper, we discuss in more details the case of surfaces and we prove an analogous of Theorem \ref{bigtheo} in the more general case of a pseudo-effective divisor $D$.
\vspace{0.5cm}

\noindent\textbf{Acknowledgements}. 
This paper was inspired by a question of J\'anos Koll\'ar. I would like to thank him for his support and many valuable conversations. I also thank John Lesieutre and Roberto Svaldi for helpful conversations.  

Partial financial support was provided by the National Science
Foundation under grants numbers DMS-1702358 and DMS-1440140 while the author was in residence at
the Mathematical Sciences Research Institute in Berkeley, California, during the Spring 2019 semester.

\section{Preliminaries}

For the convenience of the reader, let us recall a classical result that is usually obtained as a consequence of the asymptotic Riemann-Roch theorem and Serre vanishing. See \cite{Laz1} for more details and a complete proof of the following statement.  

\begin{theorem}\label{old}
Let $X$ be a projective variety of dimension $n$, and let $D$ and $A$ be Cartier divisors on $X$. Then there is a constant $C>0$ such that 
\[
h^0 (\oo_X(mD+A)) \leq C m^n.
\] 
\end{theorem}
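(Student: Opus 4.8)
The plan is to bound $h^{0}(\oo_{X}(mD+A))$ by the value of $h^{0}$ on a large multiple of a fixed very ample divisor, for which the desired estimate is exactly what asymptotic Riemann--Roch and Serre vanishing supply; the excerpt itself flags these two ingredients.

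First I would fix a very ample divisor $H$ on $X$. By Serre's theorem, applied to the coherent sheaves $\oo_{X}(-D)$ and $\oo_{X}(-A)$, there are integers $p,q\geq 1$ such that both $\oo_{X}(pH-D)$ and $\oo_{X}(qH-A)$ are globally generated. For every $m\geq 1$ the line bundle
\[
\oo_{X}(F_{m}),\qquad F_{m}:=(mp+q)H-mD-A=m(pH-D)+(qH-A),
\]
is then globally generated, being a tensor product of globally generated line bundles. Since the ground field is infinite and $\oo_{X}$ has only finitely many associated points, a general section $s_{m}\in H^{0}(\oo_{X}(F_{m}))$ vanishes at none of them, so multiplication by $s_{m}$ exhibits $\oo_{X}(mD+A)$ as a subsheaf of $\oo_{X}(mD+A)\otimes\oo_{X}(F_{m})=\oo_{X}((mp+q)H)$ (if $X$ is integral this injectivity is automatic for any nonzero section). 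Hence $h^{0}(\oo_{X}(mD+A))\leq h^{0}(\oo_{X}((mp+q)H))$ for all $m\geq 1$.

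It therefore suffices to bound $h^{0}(\oo_{X}(kH))$ for $k\geq 1$. By Serre vanishing there is a $k_{0}$ with $H^{i}(\oo_{X}(kH))=0$ for all $i>0$ and all $k\geq k_{0}$, so $h^{0}(\oo_{X}(kH))=\chi(\oo_{X}(kH))$ in that range; and by asymptotic Riemann--Roch $\chi(\oo_{X}(kH))$ agrees, for all $k$, with a polynomial $P(k)$ of degree $n=\dim X$ and positive leading coefficient. Thus $P(k)\leq C_{1}k^{n}$ for all $k\geq 1$, and since $h^{0}(\oo_{X}(kH))$ takes only finitely many values for $1\leq k<k_{0}$, there is a constant $C'$ with $h^{0}(\oo_{X}(kH))\leq C'k^{n}$ for every $k\geq 1$. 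Combining with the previous paragraph gives $h^{0}(\oo_{X}(mD+A))\leq C'(mp+q)^{n}\leq C'(p+q)^{n}m^{n}$ for all $m\geq 1$, which is the theorem with $C=C'(p+q)^{n}$ (the value at $m=0$, if one wants to include it, is a single constant absorbed into $C$, and for the displayed inequality to hold literally one reads $m$ as positive).

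I do not expect a genuine obstacle, this being the classical argument. The only points that need a little care are that the constant $C$ must be uniform in $m$, which is why the finitely many values of $m$ below the Serre-vanishing threshold are treated separately, and that the domination $\oo_{X}(mD+A)\hookrightarrow\oo_{X}((mp+q)H)$ be an honest injection of sheaves even when $X$ is non-reduced, which is the reason for the general choice of $s_{m}$ (alternatively, one may first reduce to the integral case by the routine d\'evissage along the nilradical and the irreducible components of $X$, the lower-dimensional correction terms being absorbed by an induction on $\dim X$).
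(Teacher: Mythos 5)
Your argument is correct and is exactly the classical route the paper points to (it gives no proof of its own, citing Lazarsfeld and noting the result follows from asymptotic Riemann--Roch plus Serre vanishing): dominate $mD+A$ by a multiple of a very ample $H$ via global generation of $pH-D$ and $qH-A$, then bound $h^0(kH)$ by the Hilbert polynomial. The points you flag (injectivity of multiplication by the chosen section and uniformity of the constant over the finitely many $k$ below the vanishing threshold) are handled appropriately, so nothing is missing.
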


This result has several important consequences. For example, it allows us to define a number of numerical invariants associated to a divisor $D$. For us, one of the most important is the numerical dimension.

\begin{definition}
Let $D$ be a Cartier divisor. The numerical dimension $\K(D)$  of $D$ is defined as 
\[
\K(D) = \max_k \left\{ \limsup_{m\to\infty} \frac{ h^0 (\oo_X(mD+A))} {m^k} >0 \right\},
\]
where $A$ is a sufficiently ample divisor.
\end{definition}

It is easy to see that $\K(D)$ does not depend on the choice of $A$ and that $\kappa(D)\leq \K(D) \leq n$, where $\kappa(D)$ is the Kodaira dimension of $D$. We say that $D$ is big if it has maximal numerical dimension. Moreover, we define its volume as 
\[
\vol(D) = \limsup_{m\to\infty} \frac{h^0 (X,\oo_X(mD))} {m^n/n!}.
\]

Note that $\vol(D)>0$ if and only if $\K(D)$ is maximal.

\subsection*{Augmented base locus} Associated to a divisor we also have few loci that describe its positivity.

\begin{definition}
The stable base locus of a divisor $D$ is 
\bdism
\B(D):=\bigcap_{m\geq 1}\bs(|mD|), 
\edism
where $\bs(|D|)$ is the base locus of $D$. \\ 
The augmented base locus of a divisor $D$ is the Zariski-closed set
\bdism \B_{+}(D):=\B(D-\epsilon A), 
\edism
for any ample $A$ and sufficiently small $\epsilon>0$.
\end{definition}

Note that $\B_{+}(D)\neq X$ if and only if $D$
is big and $\B_{+}(D)=\emptyset$ if and only if $D$ is ample. 

Following \cite{ELM}, we define the diminished base locus as 
\[
\B_-(D) := \bigcup_{\epsilon>0} \B_+(D + \epsilon A).
\]

The diminished base locus is empty if and only if $D$ is nef and it is closely related to the negative part of the Zariski decomposition of a pseudo-effective divisor. 

We can characterize numerically the augmented base locus using the restricted volume. 

\begin{definition}
Let $D$ be a Cartier divisor and let $V$ be a subvariety of $X$. Then the restricted volume $\vol_{X|V}(D)$ is defined to be
\[
\vol_{X|V}(D) = \limsup_{m\to\infty} \frac{\dim Im \left( H^0 (X, \oo_X(mD)) \rightarrow  H^0 (V, \oo_V(mD)) \right)}{m^{\dim V} / \dim V!}
\]
\end{definition}

In \cite{ELMNP}, the authors showed that the vanishing of the restricted volume guarantees that the subvariety lies in the augmented base locus.

\begin{theorem}\label{augmented}
Let $V \subset X$ be a subvariety and let $D$ be a Cartier divisor such that that $V\nsubseteq \B_{+}(D)$. Then $\vol_{X|V}(D) > 0$.
\end{theorem}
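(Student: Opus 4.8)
The plan is to reduce everything to the case of an ample divisor, for which positivity of the restricted volume follows from Serre vanishing together with asymptotic Riemann--Roch on $V$.

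First I would unwind the hypothesis $V\nsubseteq\B_{+}(D)$. By definition $\B_{+}(D)=\B(D-\epsilon A)$ for a fixed ample $A$ and all sufficiently small rational $\epsilon>0$; fix such an $\epsilon$. Since $V\nsubseteq\B(D-\epsilon A)$, there is a positive integer $p$ with $p\epsilon\in\nn$ and $V\nsubseteq\bs\bigl(|p(D-\epsilon A)|\bigr)$. Choosing a general member of that linear system, we obtain an effective divisor $F\sim p(D-\epsilon A)$ with $V\nsubseteq\supp F$; equivalently, a linear equivalence $pD\sim p\epsilon A+F$ together with a section $s_F\in H^{0}(X,\oo_X(F))$ that does not vanish identically on $V$.

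Second, I would use this to compare the two restricted volumes. For every $k\geq 1$, multiplication by $s_F^{\otimes k}$ gives an inclusion $H^{0}(X,\oo_X(kp\epsilon A))\hookrightarrow H^{0}(X,\oo_X(kpD))$, and since $s_F^{\otimes k}|_V\neq 0$ and $V$ is integral, the induced map on sections restricted to $V$ remains injective. Hence
\[
\dim\im\bigl(H^{0}(X,\oo_X(kpD))\to H^{0}(V,\oo_V(kpD))\bigr)\ \geq\ \dim\im\bigl(H^{0}(X,\oo_X(kp\epsilon A))\to H^{0}(V,\oo_V(kp\epsilon A))\bigr).
\]
Now $A|_V$ is ample on $V$, so tensoring the ideal sheaf sequence $0\to\mathcal{I}_V\to\oo_X\to\oo_V\to 0$ by $\oo_X(lA)$ and applying Serre vanishing gives $H^{1}(X,\mathcal{I}_V(lA))=0$ for $l\gg 0$; thus $H^{0}(X,\oo_X(lA))\to H^{0}(V,\oo_V(lA))$ is surjective for $l$ large. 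Combined with asymptotic Riemann--Roch on $V$, the right-hand side above equals $h^{0}(V,\oo_V(kp\epsilon A))=\frac{(A|_V)^{\dim V}}{(\dim V)!}(kp\epsilon)^{\dim V}+O(k^{\dim V-1})$, with $(A|_V)^{\dim V}>0$. Taking the limsup over the subsequence $m=kp$ in the definition of $\vol_{X|V}(D)$, and using that $p\epsilon\in\nn$, yields $\vol_{X|V}(D)\geq (p\epsilon)^{\dim V}(A|_V)^{\dim V}>0$.

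The main point requiring care is the first step: passing from ``$V$ is not in the stable base locus of $p(D-\epsilon A)$'' to a genuine effective divisor $F$ whose support misses $V$, and checking that multiplication by $s_F^{\otimes k}$ is compatible with restriction to $V$ and stays injective there. Once that bookkeeping is in place, the remainder is a routine application of Serre vanishing and Riemann--Roch on $V$; note also that the $\limsup$ over all $m$ dominates the $\limsup$ over the arithmetic progression $m=kp$, so nothing is lost by restricting to it.
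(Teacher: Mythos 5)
Your argument is correct. Note that the paper does not prove Theorem \ref{augmented} at all: it is quoted as a known result from \cite{ELMNP}, so there is no internal proof to compare with. What you give is a self-contained proof of this (easier) implication of the ELMNP characterization, and it is the standard one: since $V\nsubseteq\B_{+}(D)=\B(D-\epsilon A)$ with $\epsilon$ rational, some integral multiple $p(D-\epsilon A)$ has a section $s_F$ not vanishing identically on $V$, and multiplication by $s_F^{k}$ injects the image of $H^{0}(X,\oo_X(kp\epsilon A))\to H^{0}(V,\oo_V(kp\epsilon A))$ into the image of $H^{0}(X,\oo_X(kpD))\to H^{0}(V,\oo_V(kpD))$ (injectivity on $V$ because $V$ is integral and $s_F|_V\neq 0$); Serre vanishing for $\mathcal{I}_V(lA)$ makes the former image all of $H^{0}(V,\oo_V(kp\epsilon A))$ for $k\gg 0$, whose dimension grows like $\epsilon^{\dim V}(A|_V)^{\dim V}(kp)^{\dim V}/(\dim V)!$, and the $\limsup$ over the subsequence $m=kp$ bounds the full $\limsup$ from below. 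The only blemish is the final constant: after dividing by $(kp)^{\dim V}/(\dim V)!$ the bound you actually obtain is $\vol_{X|V}(D)\geq \epsilon^{\dim V}(A|_V)^{\dim V}$, not $(p\epsilon)^{\dim V}(A|_V)^{\dim V}$ (you forgot the $p^{\dim V}$ in the normalization), but this is irrelevant for the positivity claimed. Be aware that what you prove is genuinely weaker than the full ELMNP theorem (which gives the converse as well, requiring substantially more machinery), but it is exactly the direction stated and used in this paper.
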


Moreover, if $V$ is a divisor, then the restricted volume can be interpreted as a directional derivative of the volume.

\begin{theorem}[\cite{LM}, Corollary 4.24]\label{okounkov}
Let $D$ be a big Cartier divisor. Suppose $E\nsubseteq \B_+ (D+aE)$ for some $a > 0$. Then 
 \[
\vol(D+aE)-\vol(D)=n\int_{0}^{a}\vol_{X|E}(D+tE)dt.
\]
\end{theorem}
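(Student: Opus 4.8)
The plan is to prove the identity via Newton--Okounkov bodies, following \cite{LM}. Fix an admissible flag $Y_\bullet\colon X = Y_0 \supset Y_1 \supset \dots \supset Y_n = \{\mathrm{pt}\}$ with $Y_1 = E$ (after replacing $X$ by a suitable birational modification if $E$ is not smooth, which changes neither $\vol(D)$ nor the restricted volumes along $E$). For a big divisor $D'$ write $\Delta(D') = \Delta_{Y_\bullet}(D') \subset \rr^n$ for its Okounkov body, so that $\vol_{\rr^n}\!\big(\Delta(D')\big) = \frac{1}{n!}\vol(D')$; and for a big $D'$ with $E \nsubseteq \B_+(D')$ write $\Delta_{X|E}(D') \subset \rr^{n-1}$ for the Okounkov body of the restricted graded linear series $\im\big(H^0(X,\oo_X(mD')) \to H^0(E,\oo_E(mD'|_E))\big)$ with respect to the flag $Y_\bullet \cap E$ on $E$, so that $\vol_{\rr^{n-1}}\!\big(\Delta_{X|E}(D')\big) = \frac{1}{(n-1)!}\vol_{X|E}(D')$, a quantity that is positive by Theorem \ref{augmented}.

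The next step is to record how $\Delta(\cdot)$ behaves along the segment from $D$ to $D + aE$. Multiplication by the canonical section of $\oo_X(aE)$ --- whose divisor is $aE$, hence which has order exactly $a$ along $E$ --- embeds $H^0(\oo_X(mD)) \hookrightarrow H^0(\oo_X(m(D+aE)))$ and raises the first valuation coordinate by $ma$, while every section of $m(D+aE)$ of order $\geq ma$ along $E$ factors through this map; passing to the limit this yields $\Delta(D+aE) \cap \{x_1 \geq a\} = \Delta(D) + ae_1$. On the other hand, for $0 \leq t < a$ the horizontal slice $\Delta(D+aE) \cap \{x_1 = t\}$ is precisely the restricted Okounkov body $\Delta_{X|E}(D+(a-t)E)$: a section of $m(D+aE)$ of order $mt$ along $E$ is the canonical section of $mtE$ times a section of $m(D+(a-t)E)$ that does not vanish identically on $E$, and its remaining coordinates $x_2,\dots,x_n$ are computed by restricting it to $E$ and running the induced flag. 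This slicing statement is the core of \cite{LM}, and it is here that the hypothesis $E \nsubseteq \B_+(D+aE)$ is used, ensuring that the slices are honest restricted Okounkov bodies and that $\Delta(D+aE)$ really descends all the way to $x_1 = 0$.

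It remains to integrate. Splitting $\Delta(D+aE)$ into the region $\{x_1 \geq a\}$, which is $\Delta(D) + ae_1$ and has the same $n$-dimensional volume as $\Delta(D)$, and the region $\{0 \leq x_1 < a\}$, and applying Fubini in the $x_1$-direction to the latter,
\[
\frac{1}{n!}\vol(D+aE) = \frac{1}{n!}\vol(D) + \int_0^a \vol_{\rr^{n-1}}\!\big(\Delta_{X|E}(D+(a-t)E)\big)\, dt .
\]
Since the integrand equals $\frac{1}{(n-1)!}\vol_{X|E}(D+(a-t)E)$, multiplying through by $n!$ and substituting $u = a-t$ gives
\[
\vol(D+aE) - \vol(D) = n\int_0^a \vol_{X|E}(D+uE)\, du ,
\]
which is the asserted formula. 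Note that $D+uE$ is big for every $u\in[0,a]$ because $D$ is big and $uE$ effective, so each $\vol_{X|E}(D+uE)$ is an honest restricted volume and $u\mapsto \vol_{X|E}(D+uE)$ is continuous on the big locus; hence the integral is well defined and unaffected by the behaviour on the (measure-zero) set where $E$ may enter $\B_+$.

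The one genuinely substantial point is the slicing identity $\Delta(D+aE)\cap\{x_1=t\} = \Delta_{X|E}(D+(a-t)E)$. To establish it one must check that the graded linear series cut out of $H^0(\oo_X(m(D+(a-t)E)))$ by prescribing the order of vanishing along $E$ has, in the limit and up to the bounded error tolerated in the definition of an Okounkov body, the same shape as the full restricted series on $E$, and that forming a horizontal slice commutes with the closure operation defining $\Delta(\cdot)$. This is precisely the step that requires $E\nsubseteq\B_+(D+aE)$ --- equivalently, via Theorem \ref{augmented}, positivity of the pertinent restricted volumes --- and it is what keeps the argument from collapsing into a triviality about two fixed convex bodies.
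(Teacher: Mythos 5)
This statement is not proved in the paper at all: it is quoted verbatim from Lazarsfeld--Musta\c t\v a (\cite{LM}, Corollary 4.24), and what you have written is essentially a reconstruction of their original argument --- slicing the Okounkov body attached to a flag with $Y_1=E$, identifying the slice at $x_1=t$ with the restricted body of $D+(a-t)E$, and integrating by Fubini. As a sketch it is correct, with two caveats. First, the ``one genuinely substantial point'' you isolate (the slicing identity, i.e.\ the identification of the boundary slice with $\Delta_{X|E}$) is precisely the main content of the cited corollary's source, so your argument is a faithful outline of the known proof rather than an independent one; that is perfectly reasonable here, but it should be acknowledged as such. Second, the closing remark about a ``measure-zero set where $E$ may enter $\B_+$'' is both unproved and unnecessary: the hypothesis $E\nsubseteq\B_+(D+aE)$ in fact forces $E\nsubseteq\B_+(D+uE)$ for \emph{every} $u\in[0,a]$. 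Indeed, if $E\subseteq\B_+(D+uE)$ for some $u\le a$, then by Theorem \ref{plus} one has $h^0(m(D+uE)+rE)=h^0(m(D+uE))$ for all $m,r>0$, hence $\vol(D+u'E)=\vol(D+uE)$ for all $u'\ge u$; applying Theorem \ref{plus} again to $D+aE$ gives $E\subseteq\B_+(D+aE)$, a contradiction. With that observation every slice in $[0,a)$ is an honest restricted Okounkov body and no exceptional set needs to be excused, which is the cleaner way to close your argument.
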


\subsection*{Zariski decomposition} Nakayama introduced in \cite{Nak} a divisorial decomposition of pseudo-effective divisors that generalizes the classical Zariski decomposition for divisors on surfaces. We denote by $D = P_\sigma (D) + N_\sigma (D)$ the Zariski decomposition of a divisor $D$. $P_\sigma(D)$ is a movable divisor and $N_\sigma(D)$ is the negative part and it is an effective divisor. We list in the following lemma the properties of the Zariski decomposition that we will need in the rest of the paper. Proofs can be found in \cite{Nak} and \cite{CHMS14}.
 
\begin{lemma}\label{zariski}
Let $D$ be a pseudo-effective divisor. Then 
\begin{itemize}
\item[i)] if $P \leq D$ and $P$ is movable then $P \leq P_\sigma(D)$,
\item[ii)] $\supp(N_\sigma(D))$ is the divisorial part of $\B_-(D)$, which we denote it by $\B^{div}_-(D)$.
\end{itemize}
\end{lemma}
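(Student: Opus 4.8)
The plan is to work throughout with the $\sigma$-invariants underlying Nakayama's decomposition. Fix an ample divisor $A$. Recall from \cite{Nak} that for a big divisor $B$ and a prime divisor $\Gamma$ one puts $\sigma_\Gamma(B)=\inf\{\,\mathrm{mult}_\Gamma(\Delta):\Delta\ge 0,\ \Delta\equiv B\,\}$, that for a pseudo-effective $D$ one sets $\sigma_\Gamma(D)=\lim_{\eta\downarrow 0}\sigma_\Gamma(D+\eta A)$ (a non-increasing limit in $\eta$, independent of $A$), and that $N_\sigma(D)=\sum_\Gamma\sigma_\Gamma(D)\,\Gamma$ and $P_\sigma(D)=D-N_\sigma(D)$. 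I will use four elementary properties of $\sigma_\Gamma$, each of which follows from the big case (by adding an effective representative of the relevant ample twist whose support avoids $\Gamma$) and then passing to the limit: subadditivity, $\sigma_\Gamma(D_1+D_2)\le\sigma_\Gamma(D_1)+\sigma_\Gamma(D_2)$; the bound $\sigma_\Gamma(N)\le\mathrm{mult}_\Gamma(N)$ for $N\ge 0$; the vanishing $\sigma_\Gamma(P)=0$ for every $\Gamma$ when $P$ is movable (the description of the movable cone in \cite{Nak}); and the monotonicity of $\eta\mapsto\sigma_\Gamma(D+\eta A)$, which gives $\sigma_\Gamma(D)=\sup_{\eta>0}\sigma_\Gamma(D+\eta A)$.

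For (i), I would write $D=P+(D-P)$ with $D-P\ge 0$ and combine the first three properties: for every prime $\Gamma$,
\[
\sigma_\Gamma(D)\ \le\ \sigma_\Gamma(P)+\sigma_\Gamma(D-P)\ \le\ 0+\mathrm{mult}_\Gamma(D-P).
\]
Comparing coefficients of prime divisors this reads $N_\sigma(D)=\sum_\Gamma\sigma_\Gamma(D)\,\Gamma\le D-P$, hence $P\le D-N_\sigma(D)=P_\sigma(D)$.

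For (ii), since $\supp(N_\sigma(D))=\{\Gamma:\sigma_\Gamma(D)>0\}$ and $\B^{div}_-(D)=\{\Gamma:\Gamma\subseteq\B_-(D)\}$, it suffices to prove, for a prime divisor $\Gamma$, that $\sigma_\Gamma(D)>0\iff\Gamma\subseteq\B_-(D)$. For ``$\Rightarrow$'': by the monotonicity there is $\eta>0$ with $\sigma_\Gamma(D+\eta A)>0$, so in any decomposition $D+\eta A\equiv A'+E$ with $A'$ ample and $E\ge0$ one has $\mathrm{mult}_\Gamma(E)\ge\sigma_\Gamma(D+\eta A)>0$ (add an effective representative of $A'$ missing $\Gamma$); hence $\Gamma$ lies in $\B_+(D+\eta A)\subseteq\B_-(D)$. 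For ``$\Leftarrow$'' I would prove the contrapositive. Assume $\sigma_\Gamma(D)=0$; then $\sigma_\Gamma(D+\eta A)=0$ for every $\eta>0$, and I claim $\Gamma\nsubseteq\B_+(D+\mu A)$ for every $\mu>0$. Put $\eta=\epsilon=\mu/2$. Using $\sigma_\Gamma(D+\eta A)=0$, choose an effective $\Delta\equiv D+\eta A$ with $c:=\mathrm{mult}_\Gamma(\Delta)$ small enough that $\tfrac{\epsilon}{2}A+c\Gamma$ is still ample (the ample cone is open), and an effective $A'\equiv A$ with $\mathrm{mult}_\Gamma(A')=0$; then
\[
D+\mu A\ \equiv\ \Delta+\tfrac{\epsilon}{2}A+\tfrac{\epsilon}{2}A'\ =\ \Bigl(\tfrac{\epsilon}{2}A+c\Gamma\Bigr)+\Bigl((\Delta-c\Gamma)+\tfrac{\epsilon}{2}A'\Bigr)
\]
writes $D+\mu A$ as an ample class plus an effective divisor whose support omits $\Gamma$, so $\Gamma\nsubseteq\B_+(D+\mu A)$. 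Since $\mu>0$ was arbitrary, $\Gamma\nsubseteq\bigcup_{\mu>0}\B_+(D+\mu A)=\B_-(D)$.

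The only step that is not bookkeeping is the ``absorption'' move in ``$\Leftarrow$'': turning the vanishing $\sigma_\Gamma(D+\eta A)=0$ into an honest decomposition in which a tiny multiple of $\Gamma$ is traded against a tiny ample class, forcing $\Gamma$ out of $\B_+(D+\mu A)$ for \emph{every} $\mu>0$. I expect this to be the crux of the argument; observe that $\sigma_\Gamma(B)=0$ alone does \emph{not} give $\Gamma\nsubseteq\B_+(B)$ (for instance if $B=f^*H$ for a birational morphism $f$ contracting a divisor $E$ and $H$ ample, then $\sigma_E(B)=0$ while $E\subseteq\B_+(B)$), which is precisely why one must pass to $\B_-$ and why the extra ample twist cannot be dropped. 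All the remaining ingredients are the standard transfer of big-divisor statements to the pseudo-effective cone via the limit defining $\sigma_\Gamma$, together with the description of $\B_+(B)$ as the intersection of the supports of the effective parts $E$ in decompositions $B\equiv A'+E$ with $A'$ ample.
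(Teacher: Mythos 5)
Your argument is correct. Note that the paper does not actually prove this lemma --- it is stated as a summary of results from \cite{Nak} and \cite{CHMS14} --- and what you have written is essentially a faithful reconstruction of Nakayama's own arguments via the $\sigma$-invariants: part (i) is exactly the subadditivity computation in \cite{Nak}, and part (ii) is the standard equivalence $\sigma_\Gamma(D)>0\iff\Gamma\subseteq\B_-(D)$, using the characterization of $\B_+$ of a big class as the intersection of the supports of the effective parts in decompositions into ample plus effective. Two small points to be aware of: the input $\sigma_\Gamma(P)=0$ for movable $P$ is itself a (standard but nontrivial) theorem of Nakayama identifying the movable cone with $\{N_\sigma=0\}$, so it is doing real work in (i); and in the last step of (ii) the passage from ``$\Gamma\nsubseteq\B_+(D+\mu A)$ for every $\mu>0$'' to ``$\Gamma\nsubseteq\B_-(D)$'' uses that an irreducible divisor contained in the increasing union $\bigcup_{\mu}\B_+(D+\mu A)$ must lie in a single term --- this is the usual convention for the divisorial part of $\B_-$ (and is automatic over an uncountable field), but it is worth flagging rather than treating as pure bookkeeping.
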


The following two results from \cite{FKL} are of fundamental importance to this paper. They should be thought as a geometric characterization of the vanishing of the derivate of the volume function. Here we report only parts of the main theorems in \cite{FKL} which are the most relevant for us. 

\begin{theorem}\label{minus}
Let $D$ be a big divisor on $X$ and $E$ be an effective divisor on $X$. Then the following are equivalent:
\begin{enumerate}
\item[i)] $\vol(D-E) = \vol(D)$.
\item[ii)] $E \leq N_\sigma(D)$.
\item[iii)] $h^0(mD-mE) = h^0(mD)$ for all $m>0$.
\end{enumerate}
\end{theorem}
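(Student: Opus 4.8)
I plan to prove the three equivalences by establishing $(ii)\Leftrightarrow(iii)$ and $(iii)\Rightarrow(i)$ directly, and then concentrating on $(i)\Rightarrow(ii)$, which carries all the content. The first two steps are essentially bookkeeping. For $(iii)\Rightarrow(i)$ there is nothing to do: $\vol(D)$ and $\vol(D-E)$ are read off from the very numbers appearing in $(iii)$. For $(ii)\Leftrightarrow(iii)$ I will use that, for a fixed $m$, the natural injection $H^{0}(mD-mE)\hookrightarrow H^{0}(mD)$ is onto exactly when every section of $|mD|$ vanishes along $mE$, i.e.\ when $\ord_{\Gamma}|mD|\ge m\,\ord_{\Gamma}(E)$ for every prime divisor $\Gamma$; since $m\mapsto\ord_{\Gamma}|mD|$ is subadditive, $\ord_{\Gamma}|mD|/m$ decreases to $\sigma_{\Gamma}(D)=\inf_{m}\ord_{\Gamma}|mD|/m$, so $(iii)$ holding for all $m$ is equivalent to $\sigma_{\Gamma}(D)\ge\ord_{\Gamma}(E)$ for every $\Gamma$, which is by definition $E\le N_{\sigma}(D)$.

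For $(i)\Rightarrow(ii)$ I would first reduce to $E=a\Gamma$ with $\Gamma$ prime. If $\Gamma$ is a prime component of $E$ of multiplicity $a$, then $a\Gamma\le E$ gives $D-E\le D-a\Gamma\le D$, so $\vol(D-a\Gamma)=\vol(D)$ by monotonicity of the volume; assuming the prime case, this yields $a\le\sigma_{\Gamma}(D)$, and running over the components of $E$ gives $E\le N_{\sigma}(D)$. (Only $\vol(A)\le\vol(B)$ for $B-A\ge 0$ is needed here, no Brunn--Minkowski input.) For $E=a\Gamma$ prime with $\vol(D-a\Gamma)=\vol(D)$, my plan is to argue with Okounkov bodies, in the spirit of \cite{LM} and \cite{FKL}. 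Fix an admissible flag $Y_{\bullet}$ with $Y_{1}=\Gamma$ (after a resolution and a general point of $\Gamma$ if needed), and let $\Delta=\Delta_{Y_{\bullet}}(D)\subseteq\rr^{n}$. I will invoke three facts: (a) $\vol(D)=n!\,\vol_{\rr^{n}}(\Delta)$, with $\Delta$ a convex body of full dimension since $D$ is big; (b) for $t$ in the range where $D-t\Gamma$ is big, $\Delta_{Y_{\bullet}}(D-t\Gamma)=\bigl(\Delta\cap\{x_{1}\ge t\}\bigr)-t\,e_{1}$, because a section of $m(D-t\Gamma)$ is a section of $mD$ vanishing to order $\ge mt$ along $\Gamma$ and the first coordinate of the valuation shifts by $mt$, so $\vol(D-t\Gamma)=n!\,\vol_{\rr^{n}}\bigl(\Delta\cap\{x_{1}\ge t\}\bigr)$; and (c) $\min\{x_{1}:x\in\Delta\}=\sigma_{\Gamma}(D)$, since the first coordinate of a generator of $\Delta$ attached to $s\in H^{0}(mD)$ is $\ord_{\Gamma}(s)/m$ and $\inf_{m}\ord_{\Gamma}|mD|/m=\sigma_{\Gamma}(D)$. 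Granting these, $\vol(D-a\Gamma)=\vol(D)$ forces $\vol_{\rr^{n}}\bigl(\Delta\cap\{x_{1}<a\}\bigr)=0$; as $\Delta$ is convex with nonempty interior this happens only if $\Delta\subseteq\{x_{1}\ge a\}$, i.e.\ $\sigma_{\Gamma}(D)=\min_{\Delta}x_{1}\ge a$, hence $a\Gamma\le N_{\sigma}(D)$.

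The step I expect to be the crux is fact (c) (and, to a lesser extent, the validity of the slicing identity (b) throughout the big range): this is the bridge between the Okounkov body and Nakayama's decomposition, and it is exactly where the asymptotic machinery imported from \cite{FKL}, \cite{LM}, \cite{Nak} does the real work. If one prefers to stay with the restricted-volume tools quoted above, $(i)\Rightarrow(ii)$ can instead be approached by using log-concavity of $\vol^{1/n}$ to upgrade $\vol(D-E)=\vol(D)$ to $\vol(D-tE)=\vol(D)$ on an interval and then differentiating via Theorem \ref{okounkov}; on that route the sticking point becomes showing that $\Gamma\not\subseteq\B_{+}(D-t\Gamma)$ for all sufficiently small $t>0$ whenever $\sigma_{\Gamma}(D)=0$ — that removing a little of $\Gamma$ from a divisor which does not asymptotically vanish along $\Gamma$ pulls $\Gamma$ out of the augmented base locus — which is what gives the derivative formula positive content near $t=0$ and, via Theorem \ref{augmented}, the desired conclusion.
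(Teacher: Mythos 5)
Your argument is essentially correct, but be aware that the paper does not prove Theorem \ref{minus} at all: it is quoted as a black box from \cite{FKL} (``The following two results from \cite{FKL} are of fundamental importance...''), so there is no internal proof to compare with, and what you have written is a genuine independent proof rather than a reconstruction. Checking it: (iii)$\Rightarrow$(i) is indeed immediate from the definition of volume; (ii)$\Leftrightarrow$(iii) is correct, provided you say explicitly that for a \emph{big} divisor Nakayama's coefficient satisfies $\sigma_\Gamma(D)=\inf_m \ord_\Gamma|mD|/m=\lim_m \ord_\Gamma|mD|/m$ and that $N_\sigma(D)=\sum_\Gamma \sigma_\Gamma(D)\Gamma$ -- Nakayama's definition is numerical, and the agreement with the asymptotic order of vanishing is exactly the input from \cite{Nak} you need (also note that for small $m$ with $h^0(mD)=0$ condition (iii) is vacuous, which does no harm since the limit over large $m$ suffices). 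For (i)$\Rightarrow$(ii), the reduction to a single prime component by monotonicity of the volume is fine; bigness of $D-a\Gamma$, which you need for the slicing identity (b), follows from $\vol(D-a\Gamma)=\vol(D)>0$; (b) is \cite{LM}; and your fact (c), $\min\{x_1:x\in\Delta\}=\sigma_\Gamma(D)$, is correct: every normalized valuation vector has first coordinate $\ord_\Gamma(s)/m\geq\sigma_\Gamma(D)$, so the closed convex hull lies in $\{x_1\geq\sigma_\Gamma(D)\}$, while sections computing $\ord_\Gamma|mD|$ give points of $\Delta$ with first coordinate tending to $\sigma_\Gamma(D)$. Two small points you should make explicit rather than wave at: when you pass to a resolution $\pi$ to get an admissible flag, carry the hypothesis across by the sandwich $\vol(\pi^*D-a\tilde\Gamma)\geq\vol(\pi^*(D-a\Gamma))=\vol(D-a\Gamma)=\vol(D)=\vol(\pi^*D)$ together with $\sigma_{\tilde\Gamma}(\pi^*D)=\sigma_\Gamma(D)$; and spell out the convexity argument that a full-dimensional convex body meeting $\{x_1<a\}$ meets it in positive measure, so $\vol(\Delta\cap\{x_1<a\})=0$ really forces $\Delta\subseteq\{x_1\geq a\}$. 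Your closing ``alternative route'' via log-concavity and Theorem \ref{okounkov} is closer in spirit to the way such statements are proved in \cite{FKL}, and the sticking point you identify there ($\Gamma\nsubseteq\B_+(D-t\Gamma)$ for small $t$) is genuine; the Okounkov-body slicing you chose as the main line avoids that issue entirely, which is the cleaner option given the tools quoted in this paper.
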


\begin{theorem}\label{plus}
Let $D$ be a big divisor on $X$ and $E$ be an effective divisor on $X$. Then the following are equivalent:
\begin{enumerate}
\item[i)] $\vol(D+E) = \vol(D)$.
\item[ii)] $\supp(E) \subseteq \B_+(D)$.
\item[iii)] $h^0(mD+rE) = h^0(mD)$ for all $m>0$ and $r>0$.
\end{enumerate}
\end{theorem}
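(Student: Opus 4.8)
The plan is to prove the cycle of implications $(iii)\Rightarrow(i)\Rightarrow(ii)\Rightarrow(iii)$. The first is immediate: taking $r=m$ in $(iii)$ gives $h^{0}(m(D+E))=h^{0}(mD)$ for every $m>0$, and dividing by $m^{n}/n!$ and letting $m\to\infty$ yields $\vol(D+E)=\vol(D)$. For $(i)\Rightarrow(ii)$ I would argue by contraposition. If $\supp(E)\nsubseteq\B_{+}(D)$, fix a prime component $E_{0}$ of $E$ with $E_{0}\nsubseteq\B_{+}(D)$. Since for a big class $\B_{+}(D)$ is cut out by $\B(D-\epsilon A)$ for all small $\epsilon>0$, it does not grow under small numerical perturbations, so $E_{0}\nsubseteq\B_{+}(D+tE_{0})$ for all $t\in[0,a]$ and some $a\in(0,1)$. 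By Theorem~\ref{augmented}, $\vol_{X|E_{0}}(D+tE_{0})>0$ on $[0,a]$, and this function is continuous \cite{LM}, so its integral over $[0,a]$ is positive; by Theorem~\ref{okounkov}, applicable because $E_{0}\nsubseteq\B_{+}(D+aE_{0})$ with $a>0$, we get $\vol(D+aE_{0})-\vol(D)=n\int_{0}^{a}\vol_{X|E_{0}}(D+tE_{0})\,dt>0$. As $E-aE_{0}\geq0$, monotonicity of the volume gives $\vol(D+E)\geq\vol(D+aE_{0})>\vol(D)$, so $(i)$ fails.

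The substance of the theorem is $(ii)\Rightarrow(iii)$, and here I would use the characterization from \cite{ELMNP}: for a big divisor $F$, $\B_{+}(F)$ is the intersection of the supports $\supp(N)$ over all numerical decompositions $F\equiv A+N$ with $A$ ample and $N$ effective. Write $E=\sum_{i}e_{i}E_{i}$ with the $E_{i}$ prime, so the hypothesis says $E_{i}\subseteq\B_{+}(D)$ for every $i$. By Kodaira's lemma, fix a rational decomposition $D\equiv A'+N'$ with $A'$ an ample $\qq$-divisor and $N'$ an effective $\qq$-divisor; then each $E_{i}\subseteq\B_{+}(D)\subseteq\supp(N')$, hence $\ord_{E_{i}}(N')>0$ for all $i$. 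Now suppose for contradiction that $h^{0}(mD+rE)>h^{0}(mD)$ for some $m,r>0$, and pick $s\in H^{0}(mD+rE)\setminus H^{0}(mD)$. Set $c_{i}:=-\ord_{E_{i}}((s)+mD)$; from $(s)+mD+rE\geq0$ we get $c_{i}\leq re_{i}$, and from $(s)+mD\not\geq0$, whose negative coefficients can only occur along the $E_{i}$, we get $c_{i_{0}}>0$ for some $i_{0}$. Then $\Delta:=(s)+mD+\sum_{i\,:\,c_{i}>0}c_{i}E_{i}$ is effective, linearly equivalent to $mD+\sum_{i\,:\,c_{i}>0}c_{i}E_{i}$, and has coefficient $0$ along every $E_{i}$ with $c_{i}>0$; in particular $mD\equiv\Delta-\sum_{i\,:\,c_{i}>0}c_{i}E_{i}$ with $\Delta$ effective and containing none of those $E_{i}$.

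It remains to combine $\Delta$ with the fixed decomposition so as to contradict $E_{i_{0}}\subseteq\B_{+}(D)$. For positive integers $\ell,q$ with $qA'$ and $qN'$ integral, adding $q$ times $D\equiv A'+N'$ to $\ell$ times $mD\equiv\Delta-\sum c_{i}E_{i}$ gives
\[
(\ell m+q)D\;\equiv\;qA'\;+\;\Bigl(\ell\Delta+qN'-\ell\!\!\sum_{i\,:\,c_{i}>0}\!\!c_{i}E_{i}\Bigr).
\]
Choosing the index $i_{0}$ to minimize $\ord_{E_{i}}(N')/c_{i}$ over $\{i:c_{i}>0\}$, and then $\ell,q$ so that $\ell c_{i_{0}}=q\,\ord_{E_{i_{0}}}(N')$, the divisor in parentheses is effective: its coefficient along $E_{j}$ with $c_{j}>0$ equals $q\,\ord_{E_{j}}(N')-\ell c_{j}=qc_{j}\bigl(\ord_{E_{j}}(N')/c_{j}-\ord_{E_{i_{0}}}(N')/c_{i_{0}}\bigr)\geq0$, while along every other prime divisor it is manifestly nonnegative; moreover it is $0$ along $E_{i_{0}}$. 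Since $qA'$ is ample, this exhibits $(\ell m+q)D$ as ample-plus-effective with $E_{i_{0}}$ avoided, so $E_{i_{0}}\nsubseteq\B_{+}((\ell m+q)D)=\B_{+}(D)$, contradicting $E_{i_{0}}\subseteq\supp(E)\subseteq\B_{+}(D)$. This closes the cycle.

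I expect two points to need care. In $(i)\Rightarrow(ii)$ one must keep $E_{0}$ outside $\B_{+}(D+tE_{0})$ for a whole interval of $t$, not merely at one value, and know that $\vol_{X|E_{0}}(D+tE_{0})$ is regular enough for its integral over that interval to be strictly positive; both are guaranteed by the semicontinuity of $\B_{+}$ and the continuity of the restricted volume. The genuinely new ingredient is the bookkeeping in $(ii)\Rightarrow(iii)$: choosing $i_{0}$ and the multipliers $\ell,q$ so that exactly one pole divisor is cancelled while effectivity of the auxiliary divisor is preserved. Once one sees that a section of $mD$ acquiring poles only along $\B_{+}(D)$ would produce an unexpected ample-plus-effective decomposition of a multiple of $D$, the rest is forced.
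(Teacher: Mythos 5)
The paper does not prove this theorem at all: it is imported verbatim from \cite{FKL} as background, so there is no internal argument to compare against and your proof has to stand on its own. It does. The implication $(iii)\Rightarrow(i)$ is immediate as you say, and your $(i)\Rightarrow(ii)$ is exactly the route the sources take: local constancy of $\B_+$ under small numerical perturbation (so $E_0\nsubseteq\B_+(D+tE_0)$ persists on an interval $[0,a]$ with $a<1$), Theorem \ref{augmented} to get $\vol_{X|E_0}(D+tE_0)>0$ there, Theorem \ref{okounkov} plus continuity of the restricted volume to force $\vol(D+aE_0)>\vol(D)$, and monotonicity to pass to $D+E$. The direction that genuinely needs an idea, $(ii)\Rightarrow(iii)$, you handle with a pole-cancellation argument: a section of $mD+rE$ with an actual pole along the $E_i$ gives $mD\sim\Delta-\sum_{c_i>0}c_iE_i$ with $\Delta\geq 0$ omitting those $E_i$, and averaging this against a Kodaira decomposition $D\equiv A'+N'$, with $\ell/q$ tuned to the minimizer of $\ord_{E_i}(N')/c_i$, exhibits a multiple of $D$ as ample plus an effective divisor not containing $E_{i_0}$, contradicting $E_{i_0}\subseteq\B_+(D)$. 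This is sound: you only use the easy inclusion $\B_+(F)\subseteq\supp(N)$ for every decomposition $F\equiv A+N$ with $A$ ample and $N\geq 0$; the $c_i$ are positive integers and $\ord_{E_{i_0}}(N')>0$ follows from the hypothesis, so admissible integers $\ell,q$ exist. In substance this is the same mechanism as the proof in \cite{FKL}. Two cosmetic points to tidy in a written version: you reuse the symbol $i_0$ (first for an arbitrary index with $c_{i_0}>0$, then for the minimizer), and for irrational $t$ the class $D+tE_0$ is only an $\rr$-divisor, so one should cite the $\rr$-divisor versions of Theorems \ref{augmented} and \ref{okounkov}, which is the form in which they appear in \cite{ELMNP} and \cite{LM}.
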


\subsection{Numerical dimension revised}

We expect that the volume function can be used to compute the numerical dimension of any divisor $D$. Here we collect the results that will be needed later. The following is a special case of a result claimed in \cite{Leh} but in light of the example in \cite{Lei}, we provide an alternative proof. See \cite{McC} for a similar argument in the K\"ahler case. 

\begin{proposition}\label{lehmann}
Let $D$ be an effective divisor and let $A$ be a sufficiently ample divisor. Then $D = N_\sigma(D)$ if and only if there exists $C>0$ such that $\vol(mD+A) \leq C$ for all $m\geq0$. 
\end{proposition}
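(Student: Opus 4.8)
The two implications are proved separately; the first is soft, and the real content sits in the second.

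\emph{Suppose $D=N_\sigma(D)$.} Recall that $N_\sigma(\cdot)=\sum_\Gamma\sigma_\Gamma(\cdot)\,\Gamma$, where $\sigma_\Gamma$ is Nakayama's asymptotic order function along the prime divisor $\Gamma$. The hypothesis therefore says $\sigma_\Gamma(D)=\ord_\Gamma(D)$ for every prime $\Gamma$; in particular $\sigma_\Gamma(D)>0$ for each of the finitely many components of $D$. Using the homogeneity of $\sigma_\Gamma$ and the identity $\sigma_\Gamma(D)=\lim_{\varepsilon\to0^+}\sigma_\Gamma(D+\varepsilon A)$, we get $\sigma_\Gamma(mD+A)=m\,\sigma_\Gamma\!\bigl(D+\tfrac1m A\bigr)>0$ for $m\gg0$, so $\Gamma$ is a component of $N_\sigma(mD+A)$. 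Hence $\Gamma\subseteq\supp N_\sigma(mD+A)=\B^{div}_-(mD+A)\subseteq\B_-(mD+A)\subseteq\B_+(mD+A)$ by Lemma~\ref{zariski}(ii), and since $D$ has finitely many components, $\supp(D)\subseteq\B_+(mD+A)$ for all $m\gg0$. Applying Theorem~\ref{plus} to the big divisor $mD+A$ and the effective divisor $D$ gives $\vol((m+1)D+A)=\vol(mD+A)$ for $m\gg0$; thus $\{\vol(mD+A)\}$ is eventually constant, hence bounded.

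\emph{Suppose $\vol(mD+A)\le C$ for all $m$, and assume for contradiction that $D\neq N_\sigma(D)$.} Since $D$ is effective, $N_\sigma(D)\le D$, so $P:=D-N_\sigma(D)=P_\sigma(D)$ is a nonzero effective \emph{movable} divisor with $P\le D$; then $\vol(mP+A)\le\vol(mD+A)\le C$. It therefore suffices to prove the Claim: \emph{if $P$ is a nonzero movable divisor then $\vol(mP+A)\to\infty$.} I would prove this by induction on $n=\dim X$. For $n=1$, $\vol(mP+A)=m\deg P+\deg A\to\infty$. For $n\ge2$: if $P$ is big then $\vol(mP+A)\ge\vol(mP)=m^n\vol(P)\to\infty$, so assume $\vol(P)=0$. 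As $P$ is movable, $\B_+(P+\varepsilon A)$ has no divisorial component, hence a general $H\in|A|$ satisfies $H\not\subseteq\B_+(mP+tH)$ for all $m\ge1$ and $t\in(0,1]$. By Theorem~\ref{okounkov} applied to $mP+\delta A$ and $H$ (and letting $\delta\to0^+$), together with the equality $\vol_{X|H}(L)=\vol_H(L|_H)$ valid for a general $H\not\subseteq\B_+(L)$ (see \cite{ELM, ELMNP}), we obtain
\[
\vol(mP+A)\;=\;n\int_0^1\vol_H\bigl(m(P|_H)+t\,A|_H\bigr)\,dt\;\ge\;\tfrac n2\,\vol_H\bigl(m(P|_H)+\tfrac12 A|_H\bigr),
\]
using monotonicity of the volume in the ample direction. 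Now $P|_H$ is a nonzero movable divisor on $H$ and $A|_H$ is ample: movability of $P|_H$ follows from that of $P$ because $\B_+$ and $\B_-$ restrict to a general hyperplane section (\cite{ELMNP}), and $P|_H\not\equiv0$ because $P|_H\cdot(A|_H)^{n-2}=P\cdot A^{n-1}>0$, a nonzero pseudo-effective divisor pairing strictly positively with a complete-intersection curve (Boucksom--Demailly--Paun--Peternell). By the inductive hypothesis the right-hand side tends to $\infty$, contradicting $\vol(mP+A)\le C$. This finishes the reverse implication.

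\emph{Expected main obstacle.} Outside the Claim the argument is formal. The delicate points inside it are: (i) that $\B_+(P+\varepsilon A)$ has codimension $\ge2$ when $P$ is movable — this is what lets a general hyperplane section avoid the relevant augmented base loci and makes restricted volumes along it compute genuine volumes on the section; and (ii) the positivity input that a nonzero pseudo-effective divisor has positive intersection with a general complete-intersection curve (equivalently, the duality between the pseudo-effective cone of divisors and the movable cone of curves), which is what keeps $P|_H$ nonzero and allows the induction to descend. I expect (ii), together with the bookkeeping needed to invoke the \cite{ELMNP} restriction statements in exactly the form used above, to be the crux. As a sanity check: Theorems~\ref{minus} and \ref{plus} alone give only that $\vol(mP+A)$ is \emph{strictly increasing} when $P\ne0$ is movable (since $\supp P\not\subseteq\B_+(mP+A)$), not that it is unbounded — so something like the dimension induction is genuinely needed.
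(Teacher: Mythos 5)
Your forward implication is correct and is essentially the paper's argument: you show $\supp(D)\subseteq\B_+(mD+A)$ for $m\gg0$ (the paper gets this from $\supp N_\sigma(D)=\B^{div}_-(D)=\bigcup_m\B_+(D+\tfrac1m A)$ and Noetherian stabilization, you get it from positivity of the $\sigma_\Gamma$'s; both are fine) and then invoke Theorem~\ref{plus} to make $\vol(mD+A)$ eventually constant.

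The reverse implication, however, has a genuine gap, and it sits exactly where you predicted: the identity $\vol_{X|H}(L)=\vol_H(L|_H)$ for a general $H\in|A|$ with $H\nsubseteq\B_+(L)$. This is false in that generality and is not in the references you cite. On a surface, take $L$ big with Zariski decomposition $L=P+N$, $N\neq0$, and $C\in|A|$ general: every section of $mL$ contains the fixed part $\lceil mN\rceil$, so its restriction to $C$ vanishes to order roughly $mN\cdot C$ along $C\cap\supp N$, giving $\vol_{X|C}(L)\le P\cdot C<L\cdot C=\vol_C(L|_C)$. In your application $L=mP+tH$ is movable, so this particular mechanism is absent, but then the equality amounts to an asymptotic extension statement (all sections of powers of $L|_H$ on $H$ come from $X$, up to terms of lower order), which is precisely the hard content and is nowhere established — restricted volume is only $\le\vol_H(L|_H)$ a priori, and you need the lower bound. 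The auxiliary claim that movability descends to a general member of $|A|$ (i.e. $N_\sigma(P|_H)=0$) is also asserted with a vague citation; it needs something like $N_\sigma(P|_H)\le N_\sigma(P)|_H$ for general $H$, which again is not in the cited sources. Note also that your Claim, in the form ``$P$ movable and nonzero implies $\vol(mP+A)\to\infty$,'' is essentially equivalent to the hard direction of the proposition itself, so assuming the unproved restriction identity is assuming something of comparable strength; and this is exactly the circle of statements (numerical dimensions via volumes versus via sections) where, as the paper stresses after Lesieutre's example, plausible-looking reductions are known to break. The paper's own route avoids all of this and never leaves $X$: boundedness plus monotonicity gives $\vol((m+1)D+A)=\vol(mD+A)$ (for $m$ large), Theorem~\ref{minus} applied to the big divisor $(m+1)D+A$ and the effective divisor $D$ gives $D\le N_\sigma((m+1)D+A)$, and then Lemma~\ref{zariski}(i) applied to the movable divisor $P_\sigma(D)\le N_\sigma((m+1)D+A)$ forces $P_\sigma(D)=0$. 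You never use Theorem~\ref{minus}; it is the key tool here, and replacing it by the hyperplane induction would require proving the restriction statements above.
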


\begin{proof}
Suppose that $D = N_\sigma(D)$. Then 
\[
\supp(D) = \B^{div}_-(D) = \cup_m \B^{div}_+ \left( D + (1/m) A \right).
\]
As the sets $\B_+(D + (1/m) A)$ are increasing, we eventually get that $\supp(D) \subseteq \B_+(mD + A)$ for some $m$. Then the results follows from Theorem \ref{plus} part $i)$.

Suppose that $\vol(mD+A) \leq C$. As $D$ is effective, we can assume that $\vol(mD+A)$ is constant in $m$. In particular, Theorem \ref{minus} implies that $P_\sigma (D) \leq N_\sigma(mD+A)$ for some $m$. This immediately implies that $P_\sigma(D) = 0$ using Lemma \ref{zariski}.
\end{proof}

We will also need a slightly different statement. 

\begin{proposition}\label{lehmann2}
Let $D$ be an effective divisor. Then $D = N_\sigma(D)$ if and only if $h^0(mD+A) \leq C$ for all $m\geq 0$. 
\end{proposition}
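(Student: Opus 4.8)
The plan is to deduce Proposition \ref{lehmann2} from Proposition \ref{lehmann} by comparing the two boundedness conditions, namely $\vol(mD+A)\leq C$ versus $h^0(mD+A)\leq C$. One direction is immediate: if $h^0(mD+A)\leq C$ for all $m\geq 0$, then since $\vol(mD+A)=\limsup_{k\to\infty} h^0(k(mD+A))/(k^n/n!)$ and, as $D$ is effective, $h^0(k(mD+A)) = h^0(kmD+kA)\leq h^0(kmD + kA)$ — here one must be slightly careful, because the hypothesis controls $h^0(mD+A)$, not $h^0(mD+kA)$. So the clean implication runs the other way.

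The direction I expect to do honestly is: assume $D=N_\sigma(D)$ and prove $h^0(mD+A)\leq C$. By Proposition \ref{lehmann} (applied with, say, the ample divisor $A$ chosen sufficiently positive), we have $\vol(mD+A)\leq C'$ for all $m\geq 0$. Now I want to upgrade a bound on the volume to a bound on $h^0$. The key point is the argument already used inside the proof of Proposition \ref{lehmann}: $D=N_\sigma(D)$ forces $\supp(D)\subseteq \B_+(m_0 D + A)$ for some fixed $m_0$, hence by Theorem \ref{plus} part iii) (with the divisor there being $D$, the ``$D$'' there being $m_0 D + A$, and $r=m$), $h^0(k(m_0D+A) + mD) = h^0(k(m_0D+A))$ for all $k,m>0$. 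Taking $k=1$ gives $h^0((m_0+m)D + A) = h^0(m_0 D + A)$ for all $m>0$, so the sequence $h^0(mD+A)$ is eventually constant, hence bounded. Taking $C := \max_{0\leq m\leq m_0} h^0(mD+A)$ (a finite maximum of finite numbers, since $h^0(mD+A)\leq C m^n < \infty$ by Theorem \ref{old}) finishes this direction.

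For the converse, assume $h^0(mD+A)\leq C$ for all $m\geq 0$ and deduce $D=N_\sigma(D)$. I would like to reuse the second half of the proof of Proposition \ref{lehmann}, which only needs that $\vol(mD+A)$ is bounded (equivalently eventually constant, as $D$ effective makes it nondecreasing). So it suffices to show $h^0(mD+A)\leq C$ for all $m$ implies $\vol(mD+A)$ bounded. But $\vol(mD+A)=\limsup_{k} h^0(kmD+kA)/(k^n/n!)$, and the hypothesis does not directly bound $h^0(kmD+kA)$ for $k\geq 2$. The fix: replace $A$ by a multiple. Note $D=N_\sigma(D)$ is independent of the choice of the auxiliary ample divisor, and if $h^0(mD+A)\leq C$ then also $h^0(mD+A')\leq C$ for any $A'\leq$ (some large multiple of $A$)? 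No — that inequality goes the wrong way too. Instead, observe that if $h^0(mD+A)\leq C$ for all $m$, then in particular the function $m\mapsto h^0(mD+A)$, being nondecreasing and bounded, is eventually constant, say $h^0(mD+A)=h^0((m+1)D+A)$ for all $m\geq m_1$; by Theorem \ref{minus} (iii)$\Rightarrow$(ii), applied with the big divisor $(m+1)D+A$ and the effective divisor $D$, we get $D\leq N_\sigma((m+1)D+A)$ for $m\geq m_1$, and then Lemma \ref{zariski}(i) gives $P_\sigma(D)\leq P_\sigma((m+1)D+A) - \big(\text{something}\big)$; more directly, $P_\sigma(D)$ is movable and $P_\sigma(D)\leq D\leq N_\sigma((m+1)D+A)$, so $P_\sigma(D)$ is a movable divisor dominated by a negative part, forcing $P_\sigma(D)=0$, i.e. $D=N_\sigma(D)$.

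The main obstacle is precisely this bookkeeping around the auxiliary ample divisor: the hypotheses of the two propositions control $h^0$ of $mD+A$ for a fixed $A$, whereas $\vol(mD+A)$ a priori sees $h^0$ of $k(mD+A)$, and Theorems \ref{minus}/\ref{plus} are phrased for big divisors, so one must make sure $mD+A$ (or $(m+1)D+A$) is genuinely big before invoking them — which it is, since $A$ is ample. Once these are lined up, both implications reduce to the two halves of Proposition \ref{lehmann} via the ``eventually constant'' observation, plus the Theorem \ref{old} bound to guarantee each $h^0(mD+A)$ is finite so that ``bounded'' is meaningful.
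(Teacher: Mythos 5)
Your first direction (from $D=N_\sigma(D)$ to boundedness) is correct and is essentially the paper's own argument: $\supp(D)\subseteq\B_+(m_0D+A)$ for some $m_0$, then Theorem \ref{plus}, part iii), applied to the big divisor $m_0D+A$ and the effective divisor $D$ with $k=1$, makes $h^0(mD+A)$ eventually constant, hence bounded (each term being finite by Theorem \ref{old}).

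The converse, however, has a genuine gap, and it is exactly the ample-divisor bookkeeping you yourself flagged. From boundedness you correctly get that $h^0(mD+A)$ is eventually constant, i.e.\ the single equality $h^0\bigl((m+1)D+A-D\bigr)=h^0\bigl((m+1)D+A\bigr)$ for $m\geq m_1$. But condition iii) of Theorem \ref{minus} demands $h^0(kB-kD)=h^0(kB)$ for \emph{all} $k>0$ with $B=(m+1)D+A$, and for $k\geq 2$ this reads $h^0(kmD+kA)=h^0(k(m+1)D+kA)$, which your hypothesis (one unscaled copy of $A$) does not control; the same problem blocks the route through $\vol(mD+A)$ and Proposition \ref{lehmann}. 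Moreover the weaker statement you actually use, namely that a single equality $h^0(B-E)=h^0(B)$ for a big $B$ forces $E\leq N_\sigma(B)$, is false: on an abelian surface with a principal polarization $L$ and $\Theta\in|L|$ one has $h^0(L-\Theta)=h^0(\oo_X)=1=h^0(L)$, yet $N_\sigma(L)=0$ since $L$ is ample. This is why the paper does not argue this implication from Theorem \ref{minus} at all: it invokes Nakayama's theory \cite{Nak} (and \cite{CHMS14} in positive characteristic), where boundedness of $h^0(mD+A)$ for sufficiently ample $A$ means $\kappa_\sigma(D)=0$ and this is proved to force $D=N_\sigma(D)$; that is a substantive theorem about asymptotics at all multiples, not a formal consequence of the one-step constancy you derived. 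To repair your write-up you should either quote that result or supply an argument that genuinely bounds $h^0(kmD+kA)$ (equivalently $\vol(mD+A)$) from the given hypothesis.
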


\begin{proof}
Suppose $D = N_\sigma(D)$. Then, as before, $\supp(D) \subseteq \B_+(mD + A)$ and we conclude using Theorem part $iii)$.

Suppose that $h^0(mD+A) \leq C$. Then the result follows form \cite{Nak} and \cite{CHMS14} in positive characteristic.
\end{proof}

\section{Global sections on open varieties}
Let $X$ be a projective variety and let $E$ be a reduced divisor. For simplicity, we assume in this section that $X$ is a smooth variety. Let $D$ be a divisor on $X$. We are interested in global sections of $mD$ in the open set $U:=X\backslash E$. 

Recall that for any $m>0$, we have
\bdism
H^{0}(U,\oo_{U}(mD|_{U})=\lim_{k \to \infty}H^{0}(X,\oo_{X}(mD+kE)).
\edism
In particular, by finite dimensionality, we have that there exists a function $k(m)$ of $m$ such that 
\bdism
H^{0}(U,\oo_{U}(mD|_{U})=H^{0}(X,\oo_{X}(mD+k(m)E). 
\edism
When no confusion arises we denote $k(m)$ by $k$. Let $s\in H^{0}(U,\oo_{U}(mD|_{U})$ and $\bar{s}$ be the corresponding section in  $H^{0}(X,\oo_{X}(mD+kE)$. A priori, $k$ can be an arbitrary function of $m$ but, if $D$ is big, we will show that there exists $a$ such $k\leq ma$. In particular, the order of the pole of $\bar{s}$ along $E$ is at most linear in $m$.

We will prove our main results in few steps. The first one is to show that $E$ must have numerical dimension $0$.

\begin{lemma}\label{numeric}
Let $D$ be a big divisor. If $H^{0}(U,\oo_{U}(mD|_{U})$ is finite for $m$ sufficiently large then $\K(E)=0$.
\end{lemma}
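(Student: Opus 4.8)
The plan is to argue by contradiction: suppose $\K(E) \geq 1$, so that $E$ is a nonzero effective divisor whose sections grow. I would combine this with the bigness of $D$ to produce, inside $H^0(U, \oo_U(mD|_U))$, a number of sections growing faster than any polynomial of degree $n$, contradicting finite-dimensionality — or more directly, contradicting finite-dimensionality outright by exhibiting infinitely many sections for a single large $m$. The mechanism: since $D$ is big, for $m \gg 0$ we may write $mD \sim_{\text{lin}} A + G$ with $A$ ample and $G$ effective; then $H^0(X, \oo_X(mD + kE)) \supseteq H^0(X, \oo_X(A + kE))$ (twisting by the fixed section cutting out $G$). So it suffices to show $h^0(X, \oo_X(A + kE))$ is \emph{unbounded} as $k \to \infty$ for fixed ample $A$, since that forces $H^0(U, \oo_U(mD|_U))$ to be infinite-dimensional for that $m$.

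The heart of the matter is therefore: if $\K(E) \geq 1$ then $h^0(A + kE) \to \infty$ as $k \to \infty$. This is essentially the contrapositive of Proposition~\ref{lehmann2}. Indeed, Proposition~\ref{lehmann2} (applied with the effective divisor $E$ in place of $D$, and a sufficiently ample $A$) says that $E = N_\sigma(E)$ if and only if $h^0(mE + A) \leq C$ for all $m \geq 0$. Now $E = N_\sigma(E)$ is equivalent to $P_\sigma(E) = 0$, which by Proposition~\ref{lehmann} is equivalent to $\vol(mE + A)$ being bounded, i.e. to $\K(E) = 0$. Hence $\K(E) \geq 1$ is exactly the statement that $h^0(mE + A)$ is unbounded, which is what we need. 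One has to take care that the "sufficiently ample $A$" coming out of Proposition~\ref{lehmann2} can be arranged to be $\leq mD$ for some $m$: since $D$ is big, $mD - A$ is effective for $m \gg 0$ whatever $A$ is, so this is automatic.

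The main obstacle, such as it is, is bookkeeping rather than a genuine difficulty: one must be careful about the direction of the maps $H^0(X, \oo_X(mD + kE)) \to H^0(U, \oo_U(mD|_U))$ and verify that the injection $H^0(A + kE) \hookrightarrow H^0(mD + kE)$ (twist by $G$) is compatible with restriction to $U$ so that the infinitely many sections produced for fixed $m$ genuinely survive in $H^0(U, \oo_U(mD|_U))$ — they do, because multiplication by a nonzero section of $\oo_X(G)$ is injective and $U$ is dense. A secondary point is that the statement of the lemma asks for finiteness "for $m$ sufficiently large," so the contradiction should be set up for a single such large $m$: fix $m$ large enough that $mD \sim A + G$ with $A$ as in Proposition~\ref{lehmann2}; then $h^0(U, \oo_U(mD|_U)) \geq \sup_k h^0(A + kE) = \infty$, contradicting the hypothesis. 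This also makes clear why bigness of $D$ is used only to "absorb" an ample divisor, and why the conclusion is merely $\K(E) = 0$ and not, say, $E = 0$.
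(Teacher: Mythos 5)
Your proposal is correct and follows essentially the same route as the paper: argue by contradiction, use Kodaira's lemma to absorb a sufficiently ample $A$ into $mD$ for one large $m$, and then note that $\K(E)>0$ forces $h^{0}(X,\oo_{X}(A+kE))$ to be unbounded in $k$, so the direct limit $H^{0}(U,\oo_{U}(mD|_{U}))$ cannot be finite dimensional. The only cosmetic difference is that the paper extracts the growth $h^{0}(kE+A)\geq Ck^{\nu}$ directly from the definition of $\K$ (citing Proposition \ref{lehmann}), whereas you reach mere unboundedness via Proposition \ref{lehmann2}; both suffice, and the needed implication is in any case immediate from the definition of the numerical dimension.
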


\begin{proof}
Assume by contradiction that $\nu=\K(E)>0$. By Theorem \ref{lehmann}, for $A$ sufficiently ample there exists $C>0$ such that   
\[
h^{0}(X,\oo_{X}(kE+A))\geq Ck^{\nu},
\]
for $k$ sufficiently big. Since $D$ is big, using Kodaira's lemma \cite{Laz1}, we can fin $m$ big enough such that $mD-A$ is effective. In particular, 
\[
h^{0}(X,\oo_{X}(mD+k E))\geq h^{0}(X,\oo_{X}(A+kE))\geq Ck^{\nu}.
\] 
This contradicts the finiteness of the direct limit. 
\end{proof}

For the convenience of the reader, we slightly rephrase here a part of Theorem \ref{plus}.

\begin{proposition}\label{contained}
Let $X$ be a projective variety. Let $D$ be a big divisor and let $E$ be an effective divisor. Suppose that there exists $a\geq 0$ such that $\supp(E)\subseteq \B_+(D+aE)$. Then
\[
h^{0}(X,\oo_{X}(mD+kE))=h^{0}(X,\oo_{X}(m(D+aE)),
\]  
for any $m>0$ and any $k \geq ma$.
\end{proposition}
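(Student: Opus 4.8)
The plan is to obtain this statement as an essentially immediate reformulation of Theorem~\ref{plus}, applied not to $D$ but to the auxiliary divisor $D' := D + aE$. First I would note that $D'$ is big: $D$ is big and $aE$ is effective, hence pseudo-effective, so $D + aE$ remains big. Then I would observe that the hypothesis $\supp(E)\subseteq\B_{+}(D+aE)$ is exactly condition (ii) of Theorem~\ref{plus} for the pair $(D',E)$, namely $\supp(E)\subseteq\B_{+}(D')$.

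Invoking the implication (ii) $\Rightarrow$ (iii) of Theorem~\ref{plus} then yields
\[
h^{0}\bigl(X,\oo_{X}(mD'+rE)\bigr)=h^{0}\bigl(X,\oo_{X}(mD')\bigr)\qquad\text{for all }m>0,\ r>0.
\]
It remains only to translate this back. Fix $m>0$ and $k\geq ma$, and set $r:=k-ma$. If $r=0$ the claimed equality is a tautology, since $mD+kE=mD+maE=m(D+aE)$. If $r>0$, then $mD+kE=mD'+rE$ and $m(D+aE)=mD'$, so the displayed identity reads precisely $h^{0}(X,\oo_{X}(mD+kE))=h^{0}(X,\oo_{X}(m(D+aE)))$, which is what we want.

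The only delicate point — and the closest thing to an obstacle — is purely a matter of integrality bookkeeping: to make sense of $mD+kE$ and $m(D+aE)$ as genuine Cartier divisors and of $r=k-ma$ as a non-negative integer, one should take $a$ to be a non-negative integer (or, if $a$ is merely rational, restrict to $m$ sufficiently divisible), replacing $ma$ by $\lceil ma\rceil$ where necessary and checking that this reindexing changes nothing. Beyond this, the proposition requires no geometric input not already contained in Theorem~\ref{plus}.
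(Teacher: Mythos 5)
Your proposal is correct and is exactly what the paper intends: the paper presents Proposition~\ref{contained} as a mere rephrasing of Theorem~\ref{plus}, and your argument makes that rephrasing explicit by applying (ii)~$\Rightarrow$~(iii) of Theorem~\ref{plus} to the big divisor $D'=D+aE$ and setting $r=k-ma$. The remark about integrality of $a$ is a fair bookkeeping point, but otherwise nothing more is needed.
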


On the other hand, if $\supp(E)$ is not contained in the augmented base locus then finite dimensionality of global sections will fail.

\begin{proposition}\label{notcontained}
Let $X$ be a projective variety. Let $D$ be a big divisor and let $E$ be a reduced divisor. Suppose that for any $a\gg 0$ we have that $\supp(E)\nsubseteq \B_+(D+aE)$. Then $\K(E)>0$.
\end{proposition}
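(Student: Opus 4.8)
The plan is to argue by contrapositive: assume $\K(E) = 0$ and deduce that $\supp(E) \subseteq \B_+(D + aE)$ for all $a$ sufficiently large, contradicting the hypothesis. First, since $E$ is effective with $\K(E) = 0$, Proposition \ref{lehmann} (applied with $A$ sufficiently ample) gives a uniform bound $\vol(kE + A) \leq C$ for all $k \geq 0$; equivalently, by Proposition \ref{lehmann2}, $h^0(kE + A) \leq C$. The key point I want to extract is that $E = N_\sigma(E)$, i.e.\ $\supp(E) = \B^{div}_-(E)$ by Lemma \ref{zariski}(ii).

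Next I would compare the Zariski decomposition of $D + aE$ for large $a$ against that of $E$. Since $\B_-(E) = \bigcup_m \B_+(E + (1/m)A)$ and these augmented base loci increase, for $m \gg 0$ we have $\supp(E) \subseteq \B_+(mE + A)$, hence (scaling) $\supp(E) \subseteq \B_+(E + \epsilon A)$ for small $\epsilon$. Now for $a$ large, write $D + aE = a(E + \frac{1}{a}D)$; since $D$ is big, $\frac{1}{a}D$ behaves like a small ample-ish perturbation, so I expect $\B_+(D + aE) = \B_+(a E + D) \supseteq \B_+(E + \epsilon A) \supseteq \supp(E)$ for $a$ large enough, using that augmented base loci are monotone and that a big divisor dominates a small multiple of an ample one. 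More precisely, pick an ample $A$ with $D - A$ effective (Kodaira's lemma); then $D + aE \geq A + aE$ in the effective sense and one compares $\B_+(A + aE)$ with $\B_+(aE)$ — but the cleaner route is to use Theorem \ref{plus}: the bound $h^0(m(D+aE) + rE) = h^0(m(D + aE))$ for all $r$ would follow from $\K(E) = 0$ and would force $\supp(E) \subseteq \B_+(D + aE)$.

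The cleanest execution: by Proposition \ref{contained}'s converse direction via Theorem \ref{plus}(iii)$\Leftrightarrow$(ii), it suffices to show $h^0(m(D+aE) + rE) = h^0(m(D+aE))$ for all $m, r > 0$ and some $a \gg 0$. Fix $A$ ample with $D \geq A$ effective. Then $mD + (ma + r)E \geq A + (ma+r)E + (m-1)D$ and... actually the honest move is: the function $h^0(m(D+aE) + rE)$ is bounded above by $h^0$ of a big divisor plus a bounded contribution from $E$ precisely because $h^0(kE + A) \leq C$, forcing the sequence to stabilize in $r$, and then Theorem \ref{plus} gives $\supp(E) \subseteq \B_+(D+aE)$, the contradiction.

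I expect the main obstacle to be making rigorous the comparison $\B_+(D + aE) \supseteq \supp(E)$ for large $a$ from $\K(E) = 0$ — i.e.\ transferring the bounded-growth information about $E$ alone into information about the augmented base locus of the mixed divisor $D + aE$. The tool for this is precisely Theorem \ref{plus} (the equivalence of (ii) and (iii)): one must produce the equality $h^0(mD + kE) = h^0(m(D+aE))$ for $k \geq ma$ from the bound on $h^0(kE + A)$, and the subtlety is that bigness of $D$ must be used (via Kodaira's lemma) to absorb the ample twist $A$ into a multiple of $D$, so that the bound on $h^0(\cdot E + A)$ becomes a bound on the pole order of sections of multiples of $D$ along $E$.
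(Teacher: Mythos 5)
Your proposal passes to the contrapositive (assume $\K(E)=0$, i.e.\ $E=N_\sigma(E)$, and try to conclude $\supp(E)\subseteq \B_+(D+aE)$ for large $a$), which is fine as a framing, but neither of the two mechanisms you offer actually closes the argument. The monotonicity you invoke does not exist: from $\supp(E)\subseteq \B_+(E+\epsilon A)$ you cannot pass to $\supp(E)\subseteq \B_+(E+\tfrac1a D)$, because $\tfrac1a D$ is only big, i.e.\ $\tfrac{\epsilon'}{a}A$ plus an effective divisor, and adding an effective divisor can \emph{remove} a component from the augmented base locus; the elementary containment for base loci, $\B(L+N)\subseteq \B(L)\cup\supp(N)$ for $N\geq 0$, goes in the wrong direction for what you need. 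Your second mechanism fares no better: the bound $h^0(kE+A)\leq C$ does show that for each \emph{fixed} $m$ the nondecreasing sequence $r\mapsto h^0(mD+rE)$ stabilizes (indeed this is essentially how Proposition \ref{contained} is used elsewhere), but the stabilization threshold $k(m)$ is a priori uncontrolled in $m$. To apply Theorem \ref{plus}(iii)$\Leftrightarrow$(ii) you need a \emph{single} $a$ with $h^0(m(D+aE)+rE)=h^0(m(D+aE))$ for \emph{all} $m,r>0$, i.e.\ exactly the linear bound $k(m)\leq ma$ on pole order which is the content of the proposition; assuming it here is circular. Equivalently, at the level of volumes, $a\mapsto \vol(D+aE)$ is nondecreasing and bounded (by $\vol(A+aE)\leq C$), but a bounded nondecreasing function need not be locally constant anywhere, so Theorem \ref{plus}(i)$\Leftrightarrow$(ii) cannot be triggered without further input.

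That further input is precisely what the paper supplies and what is missing from your sketch: if $\supp(E)\nsubseteq\B_+(D+aE)$ for all $a$, then $\vol_{X|E}(D+aE)>0$ by the ELMNP characterization (Theorem \ref{augmented}), log-concavity of the restricted volume gives a \emph{uniform} lower bound $\vol_{X|E}(D+aE)\geq \epsilon>0$, and the Lazarsfeld--Musta\c t\u a derivative formula (Theorem \ref{okounkov}) then yields $\vol(D+aE)\geq \vol(D)+\epsilon a$, contradicting the boundedness of $\vol(A+aE)$ coming from Proposition \ref{lehmann}. Without the restricted volume, its log-concavity, and the derivative formula (or some substitute giving a quantitative lower bound on the growth of $\vol(D+aE)$ when the containment fails), the step you yourself flag as the main obstacle remains open, so the proof is incomplete. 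A secondary omission: the reduction from a general reduced $E$ to the irreducible case (handled in the paper via positive products, following Cutkosky) is not addressed.
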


\begin{proof}
We first assume that $E$ is irreducible and reduced. By assumption, there exists a positive integer $a_0$ such that $\supp(E) \nsubseteq \B_+(D+aE)$ for any $a\geq a_0$. Replacing $D$ with $D+a_0 E$, we can assume that $a_0 = 0$. By the characterization of the augmented base locus, we have that $\vol_{X|E}(D+aE)>0$ for any $a\geq 0$. Then for any $a \geq 0$, we have
\begin{align} \notag
\vol_{X|E}(D+aE)&=\vol_{X|E}\left(\frac{1}{2}D+aE +\frac{1}{2}D\right) \\ \notag
&\geq \vol_{X|E}\left(\frac{1}{2}D\right)=\frac{1}{2^{n-1}}\vol_{X|E}(D)=\epsilon>0,
\end{align}
where the inequality is obtained using the log-concavity of the restricted volume function, see \cite{ELMNP} for the original result and Corollary 4.20 in \cite{LM} for arbitrary characteristic. 

By Theorem \ref{okounkov}
\[
\vol(D+aE)\geq n \int_{0}^{a}\vol_{X|E}(D+tE) dt + \vol(D)\geq \epsilon a +\vol(D).
\]
In particular, there exists a constant $C>0$ such that $\vol(D+aE)>Ca$ for any $a>0$. Fix $A$ sufficiently ample such that $A-D$ is effective. Then $\vol(A+aE)>Ca$ for any $a>0$. This contradicts Proposition \ref{lehmann}.

For a general reduced divisor, the same argument works once we replace the restricted volume with the positive product defined in \cite{Cut}. Then Theorem 5.6 and Theorem 6.7 in there do the job.
\end{proof}

We can now prove the main result of this section, which implies the theorems in the introduction. 

\begin{theorem}\label{augtheo}
Let $X$ be a projective variety and let $E$ be a reduced divisor. Write $U:=X\backslash E$. Let $D$ be a big divisor on $X$. Then $H^{0}(U,\oo_{U}(m D|_{U}))$ is finite dimensional for any $m>0$ if and only if there exists $a\geq 0$ such that $\supp(E)\subseteq \B_{+}(D+aE)$.
\end{theorem}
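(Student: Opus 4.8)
The plan is to prove Theorem~\ref{augtheo} by combining the three propositions already established, splitting into the two directions of the equivalence.

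\textbf{The ``if'' direction.} Suppose there exists $a \geq 0$ with $\supp(E) \subseteq \B_+(D + aE)$. I would invoke Proposition~\ref{contained}: it gives $h^0(X, \oo_X(mD + kE)) = h^0(X, \oo_X(m(D+aE)))$ for every $m > 0$ and every $k \geq ma$. Since the right-hand side is a fixed finite number independent of $k$, the increasing direct limit $H^0(U, \oo_U(mD|_U)) = \varinjlim_k H^0(X, \oo_X(mD + kE))$ stabilizes at $k = \lceil ma \rceil$ and is therefore finite dimensional. This also yields the quantitative statement promised in the text that the pole order $k(m)$ is bounded by $ma$ (up to rounding), which is worth recording explicitly since Theorem~\ref{bigtheo} in the introduction is phrased in terms of $N_\sigma(D+aE)$ — here I would additionally note that $\supp(E) \subseteq \B_+(D+aE)$ is equivalent, via Theorem~\ref{plus}(ii) together with Lemma~\ref{zariski}(ii) (and slightly enlarging $a$), to $E \leq N_\sigma(D + a'E)$ for suitable $a' \geq 0$, so that Theorem~\ref{bigtheo} follows from Theorem~\ref{augtheo}.

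\textbf{The ``only if'' direction.} Suppose no such $a$ exists, i.e. for every $a \gg 0$ we have $\supp(E) \nsubseteq \B_+(D + aE)$. Then Proposition~\ref{notcontained} immediately gives $\K(E) > 0$. Now I would apply the contrapositive of Lemma~\ref{numeric}: since $D$ is big and $\K(E) > 0$, the space $H^0(U, \oo_U(mD|_U))$ must be infinite dimensional for $m$ sufficiently large — indeed the proof of Lemma~\ref{numeric} shows $h^0(X, \oo_X(mD + kE)) \geq Ck^\nu$ for fixed large $m$, so the direct limit in $k$ diverges. This contradicts finite dimensionality for all $m > 0$, closing the argument.

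\textbf{Main obstacle.} The substantive content is entirely front-loaded into Propositions~\ref{contained}, \ref{notcontained} and Lemma~\ref{numeric}, so at the level of Theorem~\ref{augtheo} there is little genuine difficulty. The one point requiring care is the precise logical matching of quantifiers between ``there exists $a \geq 0$ with $\supp(E) \subseteq \B_+(D+aE)$'' and the hypothesis of Proposition~\ref{notcontained} (``for all $a \gg 0$, $\supp(E) \nsubseteq \B_+(D+aE)$''): one must use that the sets $\B_+(D + aE)$ are monotone in $a$ (they increase as $a$ grows, since adding more of the effective divisor $E$ can only worsen positivity transversally to $E$), so that $\supp(E) \subseteq \B_+(D + a_0 E)$ for one value $a_0$ forces the same containment for all $a \geq a_0$; hence the negation of ``$\exists a$'' is exactly the hypothesis of Proposition~\ref{notcontained}. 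A second minor point is ensuring that the reduction ``$E$ irreducible'' versus ``$E$ reduced'' is handled consistently — but this is already absorbed into the cited propositions.
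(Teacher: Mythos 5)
Your proposal is correct and follows essentially the same route as the paper: the ``if'' direction via Proposition~\ref{contained} applied to the stabilizing direct limit, and the ``only if'' direction by combining Proposition~\ref{notcontained} with (the contrapositive of) Lemma~\ref{numeric}. The monotonicity discussion at the end is unnecessary: the negation of ``there exists $a\geq 0$ with $\supp(E)\subseteq \B_+(D+aE)$'' already asserts non-containment for \emph{every} $a\geq 0$, which trivially supplies the hypothesis of Proposition~\ref{notcontained} without any monotonicity of $\B_+(D+aE)$ in $a$.
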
 

\begin{proof}
If $H^{0}(U,\oo_{U}(m D|_{U}))$ is finite dimensional for any $m>0$ then by Lemma \ref{numeric}, $\K(E) = 0$. Since the numerical dimension is subadditive, the same is true for any irreducible components of $E$.  Then there must be $a$ such that $\supp (E)\subseteq \B_{+}(D+aE)$, otherwise we would get a contradiction using Proposition \ref{notcontained}. 

If there exists $a$ such that $\supp(E)\subseteq \B_{+}(D+aE)$, then by Proposition \ref{contained}
\[
h^0(U,\oo_{U}(m D|_{U})) = h^{0}(X,\oo_{X}(mD+kE)) = h^{0}(X,\oo_{X}(m(D+aE)),
\]
for any $m>0$. This finishes the proof of the theorem.
\end{proof}

\section{Proof of the Theorems}

\begin{proof}[Proof of Theorem \ref{bigtheo}]

As shown in Proposition 2.1 in \cite{FKL}, we can assume that $X$ is smooth and we can use the results in the previous section.

If $E\leq N_\sigma (D+aE)$, then 
\[
\supp(E) \subseteq \supp N_\sigma (D+aE) = \B^{div}_- (D+aE) \subseteq \B_+ (D+aE).
\]
We can apply directly Theorem \ref{augtheo} to conclude. 

Assume that the cohomology groups in questions are finite dimensional. Recall that by Theorem \ref{augtheo} and Proposition \ref{contained}, we have that 
\[
H^{0}(X,\oo_{X}(m(D+aE)) = H^{0}(X,\oo_{X}(m(D+(a+1)E)),
\]
for any $m \geq 0$. In particular, 
\begin{align}\notag
H^{0}(X,\oo_{X}(m(D+(a+1)E)) &= H^{0}(X,\oo_{X}(m(D+aE)) \\ \notag
&= H^{0}(X,\oo_{X}(m(D+(a+1)E)-mE)).
\end{align}
Then Theorem \ref{minus} and Theorem \ref{augtheo} imply the desired result.
\end{proof}

Finally, we can prove the main theorem of this paper. 

\begin{proof}[Proof of Theorem \ref{main}]

By the same argument in Corollary 2.1.38 in \cite{Laz1}, we can assume that $D$ is big. It is a standard argument but for convenience of the reader we sketch the ideas here. 

Consider the collection of finite dimensional vector spaces $V_m := H^0(U,\oo_U(mD|_U))$. The section ring is the graded algebra
\[
R(V_\bullet) := \bigoplus_{m=0}^{\infty}  V_m.
\]
Let $\phi_m : U \subset X \dashrightarrow Y_m \subset \pp (V_m)$ be the map defined by evaluating global sections. The main theorem of \cite{CJ19} shows the existence of the Iitaka fibration associated to the algebra $R(V_\bullet)$. In particular, there exists a fixed morphism $\phi_{\infty}: X_{\infty} \rightarrow Y_{\infty}$ such that the rational maps $\phi_m$ are birationally equivalent to $\phi_\infty$, for $m$ large enough.

Since the assertion is invariant under birational transformations, we may replace $X$ with $X_\infty$. Moreover, we can assume that $Y_\infty$ is smooth. For simplicity, let us denote by $\phi: X \rightarrow Y$ the Iitaka fibration and tet $V$ be the image of $U$ in $Y$.  As in the proof of Corollary 2.1.38 in \cite{Laz1}, we can assume $D$ is vertical for $\phi$ and then there exists an ample divisor $H$ on $Y$ such that
\[
h^0(U,\oo_U(mD|_U)) \leq h^0(V,\oo_{V}(mH|_V)).
\]
We need to show that $h^0(V,\oo_{V}(mH|_V))$ if finite dimensional. To show that it suffices to show that there exists $m_0$ such that $\oo_X(m_0 D) \otimes \phi^*\oo_Y(- H)$ has a global section. This is done in Theorem 2.1.33 in \cite{Laz1}. In particular, for any $m>0$
\[
h^0(V,\oo_{V}(mH|_V)) \leq h^0(U,\oo_U(m_0mD|_U)).
\]

This shows that the asymptotic behavior of the sections of $D$ can be estimated using a big divisor on the base of the Iitaka fibration. In particular, we can assume $D$ to be a big divisor with finite dimensional $h^0(U,\oo_U(mD|_U))$ for any $m$.

By Theorem \ref{bigtheo} and Proposition \ref{contained}, we have that there exists $a$ such that
\[
h^0(U,\oo_U(mD|_U)) = h^0(X,\oo_X(m(D+a E )),
\]
for any $m\geq 0$. Then the result follows from the classical result, Theorem \ref{old}.
\end{proof}

\section{Further remarks}

We conclude this paper collecting some results on the finiteness of the cohomology groups considered before. Theorem \ref{bigtheo} deals with the finiteness in the case $D$ is a big divisor. It would be interesting to understand the general case as well. It is not hard to do it in the case of surfaces.  

First of all, the same argument of Lemma \ref{numeric} implies that we must have $\kappa(E) = 0$. Using Riemann-Roch we can conclude a stronger statement.

\begin{proposition}
Let $X$ be a smooth surface and let $E$ be a reduced divisor with $E^{2}\geq 0$. Let $D$ be a pseudo-effective divisor such that $D\cdot E\geq 1$ if $E^{2}=0$, there exists $m_{0}$ such that $H^{0}(U,\oo_{U}(mD|_{U}))$ is infinite dimensional for any $m\geq m_{0}$. 
\end{proposition}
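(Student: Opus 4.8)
The plan is to translate the statement into an asymptotic growth question on $X$ and answer it via Riemann--Roch. Recall that, for $m>0$,
\[
H^{0}(U,\oo_{U}(mD|_{U})) = \lim_{k\to\infty} H^{0}(X,\oo_{X}(mD+kE)),
\]
and the transition maps of this direct system are injective, being multiplication by a fixed section cutting out $E$; hence the left-hand side is infinite dimensional precisely when the non-decreasing sequence $k\mapsto h^{0}(X,\oo_{X}(mD+kE))$ is unbounded. So it is enough to exhibit $m_{0}$ such that, for every $m\geq m_{0}$, one has $h^{0}(X,\oo_{X}(mD+kE))\to\infty$ as $k\to\infty$.

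First I would dispose of the obstruction term. By Serre duality $h^{2}(X,\oo_{X}(mD+kE)) = h^{0}(X,\oo_{X}(K_{X}-mD-kE))$; if this were nonzero, then $K_{X}-mD-kE$ would be linearly equivalent to an effective divisor $F$. Intersecting $F$ with a fixed ample divisor $H$ and using that $D$ is pseudo-effective, so $D\cdot H\geq 0$, while $E$ is a nonzero effective divisor, so $E\cdot H>0$, we would obtain $k\,(E\cdot H)\leq K_{X}\cdot H$. Hence there is a constant $k_{0}$, independent of $m$, with $h^{2}(X,\oo_{X}(mD+kE))=0$ for all $k\geq k_{0}$, and therefore $h^{0}(X,\oo_{X}(mD+kE))\geq\chi(X,\oo_{X}(mD+kE))$ once $k\geq k_{0}$.

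It then remains to estimate the Euler characteristic. Riemann--Roch on the surface gives
\[
\chi(X,\oo_{X}(mD+kE)) = \chi(\oo_{X}) + \tfrac{1}{2}\bigl(m^{2}D^{2}+2mk\,(D\cdot E)+k^{2}E^{2}-m\,(D\cdot K_{X})-k\,(E\cdot K_{X})\bigr),
\]
a polynomial in $k$ with leading coefficient $\tfrac{1}{2}E^{2}$ in degree two and coefficient $mD\cdot E-\tfrac{1}{2}E\cdot K_{X}$ in degree one. If $E^{2}>0$ the quadratic term alone forces $\chi\to+\infty$ as $k\to\infty$, for every $m$. If $E^{2}=0$ the quadratic term vanishes, but the hypothesis $D\cdot E\geq 1$ makes the linear coefficient positive as soon as $m>\tfrac{1}{2}E\cdot K_{X}$, so $\chi\to+\infty$ as $k\to\infty$ again. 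Choosing $m_{0}$ larger than $\tfrac{1}{2}E\cdot K_{X}$ (and at least $1$) and combining with the previous step yields $h^{0}(X,\oo_{X}(mD+kE))\to\infty$ as $k\to\infty$ for all $m\geq m_{0}$, which is what we wanted. The one point that needs care is the vanishing of $h^{2}$: a priori a negative part of $D$ could conspire to keep $K_{X}-mD-kE$ effective for arbitrarily large $k$, and it is precisely pairing against an ample class, together with pseudo-effectivity of $D$, that rules this out; everything else is Riemann--Roch bookkeeping.
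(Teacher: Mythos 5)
Your proof is correct and follows essentially the same route as the paper: reduce to showing $h^{0}(X,\oo_{X}(mD+kE))$ is unbounded in $k$ via the injective direct system, then apply Riemann--Roch and split into the cases $E^{2}>0$ and $E^{2}=0$ using the hypothesis $D\cdot E\geq 1$. The only difference is the treatment of $h^{2}$: the paper bounds it uniformly by $h^{0}(X,\oo_{X}(K_{X}))$ via Serre duality, while you show it vanishes for $k$ large by pairing $K_{X}-mD-kE$ with an ample class, which is an equally valid (indeed slightly more careful) way to handle that step.
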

\begin{proof}
Recall that for any $k\geq 0$ we have a natural inclusion 
\bdism
H^{0}(X,\oo_{X}(mD+kE))\hookrightarrow H^{0}(U,\oo_{U}(mD|_{U})),
\edism
so it suffices to show that there exists $m_{0}$ such that
\bdism
\lim_{k\to\infty}h^{0}(X,\oo_{X}(mD+kE))=\infty,
\edism
for any $m\geq m_{0}$. By Riemann-Roch
\begin{align}\notag
h^{0}(X,\oo_{X}(mD+kE))+h^{2}(X,\oo_{X}(mD+kE))\geq  \\ \notag
\frac{(mD+kE)^{2}}{2}-m\frac{D\cdot K_{X}}{2}-k\frac{E\cdot K_{X}}{2}+\chi(\oo_{X}).
\end{align}
Moreover, by Serre duality 
\bdism
h^{2}(X,\oo_{X}(mD+kE))=h^{0}(X,\oo_{X}(K_{X}-mD-kE))\leq h^{0}(X,\oo_{X}(K_{X})).
\edism
All together
\begin{align}\notag
h^{0}(X,\oo_{X}(mD+kE))&\geq k^{2}\frac{E^{2}}{2}+k \left(mD\cdot E-\frac{E\cdot K_{X}}{2}\right) \\ \notag
&+\frac{(mD)^{2}-mD\cdot K_{X}}{2}+\chi(\oo_{X})-h^{0}(X,\oo_{X}(K_{X})).
\end{align}
In particular, if $E^{2}>0$ the limit is infinite for any $m\geq 1$. If $E^{2}=0$, then by assumption, we can find $m_{0}$ such that $m_{0}D\cdot E-E\cdot K_{X}/2 >0$. The limit is again infinite for $m\geq m_{0}$.
\end{proof}

The above result imposes some extra conditions on the positivity of $E$, but they are still weaker than the case when $D$ is big where $\K(E)$ essentially characterizes the finite dimensionality of the spaces of global sections. When $D$ is not big, there are exceptions to Theorem \ref{bigtheo}. For example, if $\kappa(E) = 0$, $\K(E) =1 $ and $D = E$, we have new examples where finite dimensionality of global sections holds. The next theorem says that these are basically the only exceptions. 

\begin{theorem}
Let $X$ be a normal projective surface. Let $D$ be a pseudo-effective Cartier divisor and let $E$ be a reduced divisor. Then $h^{0}(U,\oo_{U}(mD|_{U}))$ is finite dimensional for all $m\geq 0$ if and only if one of the following holds: 
\begin{itemize}
\item[i)] $E \leq N_\sigma(D + a E)$ for some $a$, or
\item[ii)] $\kappa(D)=0$ and $P_\sigma(D) \equiv a P_\sigma(E)$ for some rational number $a$, or
\item[iii)] $D \equiv N_\sigma(D)$ and $\kappa(E) =0$.
\end{itemize}
\end{theorem}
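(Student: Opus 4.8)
The plan is to reduce to the big case of Theorem \ref{bigtheo} by running the Zariski decomposition $D = P_\sigma(D) + N_\sigma(D)$ and analyzing the contribution of $P_\sigma(D)$ on the Iitaka fibration, keeping careful track of how $E$ interacts with each piece. The three alternatives in the statement correspond to the three qualitatively different behaviors of $P_\sigma(D)$: either $P_\sigma(D)$ is big (which on a surface, since $P_\sigma(D)$ is nef, means $P_\sigma(D)^2 > 0$), or $P_\sigma(D)$ has numerical dimension $1$, or $P_\sigma(D) \equiv 0$. First I would dispose of the case $P_\sigma(D) \equiv 0$, i.e. $D \equiv N_\sigma(D)$: here $h^0(X, \oo_X(mD+kE))$ stays bounded as $m,k$ grow precisely when $\kappa(E)=0$, which follows by combining Proposition \ref{lehmann2} applied to $D + k'E$ with the observation (from the proof of the preceding Proposition, via Riemann--Roch) that on a surface $\kappa(E)\ge 1$ together with $D \equiv N_\sigma(D)$ still forces the limit to blow up; conversely if $\kappa(E)=0$ then $E$ contributes only finitely many sections and boundedness of $h^0$ reduces to boundedness for $mN_\sigma(D) + (\text{fixed})$, which is Proposition \ref{lehmann2}. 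This gives alternative iii).

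Next, when $P_\sigma(D)$ is big on the smooth model, I would like to apply Theorem \ref{bigtheo} to $P_\sigma(D)$ in place of $D$. The point is that $H^0(U, \oo_U(mD|_U))$ and $H^0(U, \oo_U(mP_\sigma(D)|_U))$ have the same asymptotic finiteness properties: adding the effective divisor $N_\sigma(D)$ only adds a fixed base component whose support, being contained in $\B_-(D)$, is swallowed into $\B_+(P_\sigma(D)+aE)$ for $a$ large — so one checks $E \le N_\sigma(D+aE)$ if and only if $E \le N_\sigma(P_\sigma(D)+a'E)$ for suitable $a'$, using Lemma \ref{zariski} and Theorem \ref{minus}. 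Thus in the big-$P_\sigma(D)$ case finiteness holds iff $E \le N_\sigma(D+aE)$ for some $a$, which is alternative i). The remaining case is $\K(P_\sigma(D)) = 1$: here I would pass to the Iitaka fibration of $D$ (a fibration to a curve, after blowing up), as in the proof of Theorem \ref{main}, and observe that $D$ is big on fibers only if the fiber class is proportional to $P_\sigma(D)$. Concretely, $\kappa(D)=0$ forces $P_\sigma(D)$ itself to be numerically proportional to a fiber, and finiteness of $H^0(U,\oo_U(mD|_U))$ then demands that $E$ not increase the (one-dimensional) space of sections asymptotically, which happens exactly when the vertical part of $E$ is numerically proportional to the fiber, i.e. $P_\sigma(D) \equiv a P_\sigma(E)$ for some rational $a \ge 0$ — alternative ii). One must also check that when $\kappa(D) \ge 1$ with $\K(P_\sigma(D))=1$ the finiteness always fails (the fibration has positive-dimensional image and the restriction of $D$ to the base has positive volume there, so Theorem \ref{old} does not cap the growth coming from $E$ unless $E$ is again proportional to the fiber, which is covered).

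For the converse directions I would verify each alternative produces finiteness directly: i) is immediate from Theorem \ref{bigtheo} applied on the smooth model together with the reduction of the previous paragraph; ii) follows because the section ring of $D$ on $U$ then injects into the section ring of a single semi-ample class on a curve, which has at most linear growth hence is finite in each degree; iii) follows from Proposition \ref{lehmann2} as above. The main obstacle I anticipate is the case $\K(P_\sigma(D)) = 1$ with $\kappa(D) = 0$: one must argue that a positive numerical dimension part of $P_\sigma(D)$ that is \emph{not} proportional to $P_\sigma(E)$ forces infinitely many sections of $mD + kE$ for some fixed $m$ — this requires a Riemann--Roch or restricted-volume computation on the surface showing that $\vol_{X|C}(mD + kE)$ grows for a suitable curve $C$, paralleling Proposition \ref{notcontained} but now working on the base of the fibration where $D$ contributes a nonzero (numerically trivial-but-not-zero) class. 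Handling normality (the theorem allows $X$ normal, not smooth) is routine via Proposition 2.1 in \cite{FKL} as in the proof of Theorem \ref{bigtheo}, and the passage between $D$ and its smooth pullback preserves all the numerical conditions in i)--iii).
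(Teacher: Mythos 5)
Your proposal does not close the case that is actually the heart of the theorem, namely the case $\kappa_\sigma(P_\sigma(D))=1$, and the route you sketch for it would not work. You propose to pass to the Iitaka fibration of $D$, but alternative ii) lives precisely where $\kappa(D)=0$ (and one must also handle $\kappa(D)=-\infty$, which is possible for a pseudo-effective Cartier divisor of numerical dimension $1$); in those cases there is no Iitaka fibration to a curve, so ``$P_\sigma(D)$ is numerically proportional to a fiber'' has no meaning, and you yourself flag the implication ``finiteness $\Rightarrow P_\sigma(D)\equiv aP_\sigma(E)$'' as an unresolved obstacle. That implication is exactly what has to be proved. The paper's argument supplies the missing idea with no fibration at all: it Zariski-decomposes $E=P+N$ (not $D$), injects $H^0(mD+kP)\hookrightarrow H^0(mD+kE)$, uses the Riemann--Roch growth estimate of the preceding proposition to force $P^2=0$ and $P_\sigma(D)\cdot P=0$ (otherwise $h^0(mD+kE)$ grows unboundedly in $k$), and then concludes $P_\sigma(D)\equiv aP$ by the Hodge index theorem; the case $P\equiv 0$ gives i) and the case $P_\sigma(D)\equiv 0$ gives iii). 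Your proposal never invokes Hodge index, and without it (or a substitute) alternative ii) is not established.

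There is also a concrete false step: you assert that when $\kappa(D)\geq 1$ and $\kappa_\sigma(P_\sigma(D))=1$ finiteness ``always fails'' unless $E$ is proportional to the fiber. Take $X$ an elliptic surface, $D=F$ a fiber class (so $\kappa(D)=\kappa_\sigma(D)=1$) and $E$ a $(-2)$-curve contained in a reducible fiber, so $D\cdot E=0$ and $E^2<0$. Then $D+aE$ has Zariski decomposition with positive part $D$ and negative part $aE$, so $E\leq N_\sigma(D+aE)$ for $a\geq 1$: finiteness holds via alternative i) even though $E$ is not numerically proportional to $F$. The correct dichotomy in this regime is between i) and ii), and it is produced by the Hodge-index argument above, not by a growth comparison on the base of a fibration. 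Finally, in the converse direction for ii) your justification (``injects into the section ring of a semi-ample class on a curve'') presumes semi-ampleness of a nef class of numerical dimension $1$, which need not hold; one should instead argue directly that $mD+kE$ is numerically $mN_\sigma(D)+kN_\sigma(E)+(\text{const}+k)P_\sigma(E)$ up to proportionality and bound $h^0$ in $k$ for fixed $m$. Your treatment of the big case (via Theorem \ref{bigtheo}) and of the case $D\equiv N_\sigma(D)$ (via Proposition \ref{lehmann2}, plus the observation that $m=0$ already forces $\kappa(E)=0$) is essentially sound.
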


\begin{proof}

If one of the three conditions holds, it is easy to show that the groups in question are finite dimensional. 

For the other direction, let us denote by $E = P + N$ the Zariski decomposition of $E$. First note, If $P$ is numerically trivial, then there is nothing to prove as it implies that $E = N$ and condition $i)$ is satisfied. So we can assume that $P$ is not numerically trivial. 

For the sake of simplicity, let us assume at the moment that $D$ is a non-zero nef divisor. Since $N$ is effective, we have that for any $m\geq 0$ and $k\geq 0$
\[
H^0(X, \oo(mD + kP )) \hookrightarrow H^0(X, \oo(mD + kE )).
\]

As $P$ is a nef divisor, we can assume that $P^2 =0$ otherwise the groups $H^0(X, \oo(mD + kE ))$ will grow unbounded in $k$ by Lemma \ref{zariski}. Moreover, by the same result, we can assume that $D\cdot P =0 $. By Hodge index theorem we get that $D\equiv a P$ for some rational number $a$. 

The above argument implies that if $D = P_\sigma(D) + N_\sigma(D)$ is the Zariski decomposition of $D$ and $P_\sigma(D)$ is not trivial, then condition $ii)$ is satisfied. 

Finally if $P_\sigma(D)$ is trivial. then the finite dimensionality of the groups implies just that the Kodaira dimension of $E$ is $0$. 
\end{proof}

It would be interesting to carry out a similar analysis for higher dimensional varieties.

\begin{question}
Let $D$ be a pseudo-effective divisor with $\K(D) < n$. When is $H^{0}(U,\oo_{U}(mD|_{U}))$ a finite dimensional vector space for any $m$ large enough? 
\end{question}

\end{document}